%% file: main.tex
\documentclass{article}

\usepackage{mathtools}
\usepackage{amssymb}
\usepackage{amsthm}
\usepackage{mytheorems}
\usepackage{mymacros}

\theoremstyle{plain}
\newcommand{\thmA}{Theorem~A}
\newtheorem*{TheoremA}{\thmA}
\newcommand{\thmB}{Theorem~B}
\newtheorem*{TheoremB}{\thmB}
\newcommand{\thmC}{Theorem~C}
\newtheorem*{TheoremC}{\thmC}
\newcommand{\thmD}{Theorem~D}
\newtheorem*{TheoremD}{\thmD}
\newcommand{\thmE}{Theorem~E}
\newtheorem*{TheoremE}{\thmE}
\newcommand{\ocl}{Orbit Counting Lemma}
\newtheorem*{LemmaOCL}{\ocl}

\newcommand{\extF}{\widetilde{\openF}}

\begin{document}

\title{\bf On the finite subgroups of $2$-dimensional general linear groups}

\author{Paul Flavell}

\maketitle

\input{intro}

\input{plat}
\input{stab}

\input{np}

\input{pl}

\input{sf}

\input{exc}
\input{ptab}
\input{pte}

\input{zg}

\input{bibliography}


\end{document}

%% file: intro.tex
\section{Introduction} \label{intro}

Throughout this paper, $\openF$ is a field.
In a classic work,
Dickson\cite{D} classified the subgroups of the group $\psl{2}{\openF}$
in the case that $\openF$ is finite.
We will give a new proof of Dickson's classification as well as extending it slightly.
Pursuing the analysis further, 
we give a characterization of the finite subgroups of the groups $\pgl{2}{\openF}$
in terms of permutation groups. See \thmE.

The symbol $\pl{\openF}$ refers to the projective line over $\openF$
and the term \emph{platonic group} will be defined shortly
as will the symbol $\afiveBAR{\openF}$.

\begin{TheoremA}
    Let $\BAR{G}$ be a finite subgroup of $\pgl{2}{\openF}$.
    Let $\extF$ be an extension of $\openF$ such that every element
    of $\BAR{G}$ has a fixed point on $\pl{\extF}$.
    Set \[
        \Gamma = \bset{ \gamma \in \pl{\extF} }{ \mbox{%
            $\listset{ \gamma } = \fix{\pl{\extF}}{\BAR{g}}$ %
            for some $\BAR{g} \in \BAR{G}\nonid$
        }}.
    \]
    Then at least one of the following holds.
    \begin{itemize}
        \item[(a)]  $\BAR{G}$ has an orbit of length $1$ or $2$ on $\pl{\extF}$.

        \item[(b)]  The order of $\BAR{G}$ is not divisible by $\Char\openF$,
                    $\Gamma = \emptyset$ and
                    $\BAR{G}$ is a platonic group in its action on $\pl{\extF}$.

        \item[(c)]  The order of $\BAR{G}$ is divisible by $\Char\openF$,
                    $\Gamma$ is an orbit,
                    there is exactly one other nonregular orbit $\Delta$ on $\pl{\extF}$
                    and \[
                        \Gamma \BAR{h} = \pl{\openE}
                    \]
                    for some finite subfield $\openE \subseteq \openF$ and
                    $\BAR{h} \in \pgl{2}{\openF}$.
                    One of the following holds.
                    \begin{itemize}
                        \item[(i)]  $\BAR{G}^{\BAR{h}} = \psl{2}{\openE}$ or $\pgl{2}{\openE}$
                                    and $\card{\Delta} = \card{\openE}(\card{\openE} - 1)$.

                        \item[(ii)] $\Char\openF = 3$, $\openF$ contains a square root of $-1$,
                                    $\card{\openE} = 9$,
                                    $\BAR{G}^{\BAR{h}} = \afiveBAR{\openF} \isom \alt{5}$ and
                                    $\card{\Delta} = 12$.
                    \end{itemize}
    \end{itemize}
\end{TheoremA}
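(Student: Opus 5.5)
The plan is to study the action of $\BAR{G}$ on $\pl{\extF}$ through the fixed points of its nonidentity elements and an orbit-counting argument. First I record how an element of $\pgl{2}{\openF}$ acts on $\pl{\extF}$. A nonidentity element $\BAR{g}$ fixes at most two points. Since every element of $\BAR{G}$ has a fixed point on $\pl{\extF}$, each $\BAR{g} \neq 1$ fixes either one point or two points.

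The two cases split by order. If $\BAR{g}$ fixes exactly one point, it is conjugate over $\extF$ to a translation $x \mapsto x+c$, so its order is $\Char\openF$. If $\BAR{g}$ fixes exactly two points, it is conjugate to $x \mapsto \lambda x$, so its order is coprime to $\Char\openF$. Consequently $\Gamma \neq \emptyset$ if and only if $\Char\openF$ divides $\card{\BAR{G}}$. Moreover, the stabilizer $\BAR{G}_\gamma$ of any point has the form (normal $p$-subgroup of translations) $\rtimes$ (cyclic group). Such a stabilizer fixes a second point exactly when its $p$-part is trivial.

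Next, assuming (a) fails, I count pairs $(\BAR{g},\gamma)$ with $\BAR{g} \neq 1$ fixing $\gamma$, summing over the nonregular orbits, as in the \ocl. Write $n = \card{\BAR{G}}$, and let $m_i$ be the stabilizer orders on the nonregular orbits.

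\emph{Coprime case.} Every $\BAR{g} \neq 1$ fixes exactly two points, so
\[
2(n-1) = \sum_i n(1 - 1/m_i).
\]
This gives the classical equation $2 - 2/n = \sum_i (1 - 1/m_i)$. Since there is no orbit of length $1$ or $2$, there are exactly three nonregular orbits with $(m_i) = (2,2,k)$, $(2,3,3)$, $(2,3,4)$ or $(2,3,5)$. This is precisely the platonic situation, giving (b), modulo matching the paper's definition of platonic group.

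\emph{Modular case.} Let $P$ be a Sylow $p$-subgroup. Elements of order $p$ have a unique fixed point, and two distinct points give stabilizers whose $p$-parts intersect trivially. Hence the $p$-parts of point stabilizers form a trivial-intersection set, and $\Gamma$ is a union of orbits. I expect to show $\Gamma$ is a single orbit via Sylow's theorem, since all stabilizer $p$-parts are conjugate into $P$ and $P$ fixes a unique point. Then $\card{\Gamma} = \card{\BAR{G} : N}$ with $N = \BAR{G}_\gamma = Q \rtimes C$, where $Q$ is a $p$-group and $C$ is cyclic of order $e$ coprime to $p$. Counting fixed points of nonidentity elements gives
\[
n - 1 = \card{\Gamma}(\card{Q}-1) + \text{contributions of two-point elements}.
\]
Combined with $\card{\Gamma} \equiv 1 \pmod{\card{Q}}$, this should force $\card{\Gamma} = \card{Q}+1$ and exactly one further nonregular orbit $\Delta$.

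The main obstacle is identifying the field structure. Conjugate by $\BAR{h}$ so that $\gamma = \infty$ and $Q$ consists of translations $x \mapsto x + c$ with $c$ in an additive subgroup $A$. After moving a second point of $\Gamma$ to $0$, conjugation by $C$ multiplies $A$ by elements $\lambda$. I would show that $A$ is closed under the multiplication generated by these $\lambda$ together with the ring generated via elements swapping $0$ and $\infty$. This should make $A$ a finite field $\openE$ with $\Gamma = \openE \cup \{\infty\} = \pl{\openE}$; here $\card{\Gamma} = \card{\openE}+1$ is the key constraint. Then $\BAR{G}$ permutes $\pl{\openE}$ and lies in its setwise stabilizer, which is $\pgl{2}{\openE}$ extended by field automorphisms. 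Since $\BAR{G} \leq \pgl{2}{\openF}$ acts $\openF$-linearly and semilinear maps that are not linear are excluded, $\BAR{G}$ contains $\psl{2}{\openE}$ generated by its unipotents, except in the small exceptional case.

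The exception arises when $Q$ is a proper subgroup of $A$ and the unipotents do not generate $\psl{2}{\openE}$. With $\card{\openE} = 9$ this yields an $\alt{5}$ inside $\pgl{2}{9}$ acting on $10$ points. That requires $\sqrt{-1} \in \openF$, giving (ii). Finally the size of $\Delta$ follows from the fixed-point count: $\card{\openE}(\card{\openE}-1)$ in case (i), and $12$ in case (ii).
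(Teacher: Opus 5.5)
\paragraph{Verdict.} Your coprime case is essentially right. It has one slip: $(2,2,k)$ gives an orbit of length $2$, so the failure of (a) excludes it; it is not platonic. You also still need something like Lemma~\ref{plat.1} to identify the groups. Your Sylow argument that $\Gamma$ is a single orbit is correct and quicker than the paper's: a central element of order $p$ in $P$ has a unique fixed point, which $P$ must fix. The modular case, however, has genuine gaps.

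\paragraph{The counting step.} The element count $n-1=\card{\Gamma}(\card{Q}-1)+(\text{number of two-point elements})$ carries no information by itself. Together with $\card{\Gamma}\equiv 1 \pmod{\card{Q}}$ it cannot force $\card{\Gamma}=\card{Q}+1$. For $\afiveBAR{\openF}$ in characteristic $3$ one has $\card{Q}=3$, $\card{\Gamma}=10$ and $59=10\cdot 2+15+24$. What is missing is the argument of Theorem~D:
\begin{itemize}
\item Orbit counting shows that $\Gamma$ and one further orbit $\Delta$ are the only nonregular orbits, with $\card{\Delta}=(q-2)\card{\Gamma}+2$ where $q=\card{Q}$. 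Hence $\card{\Delta}$ divides $2\card{\BAR{G}_\gamma}$.
\item Lemma~\ref{stab.1}, applied to $\BAR{G}_\gamma$ acting on $(\Gamma\cup\Delta)\setminus\{\gamma\}$, gives $\card{\Gamma}=1+aq+b\card{\BAR{G}_\gamma}$ with $a,b\in\{0,1\}$.
\end{itemize}
Only this yields the real dichotomy. Either $\BAR{G}$ is $2$-transitive on $\Gamma$ with $\card{\Gamma}=q+1$ and two-point stabilizers of order $(q-1)/2$ or $q-1$. Or $q=3$, $\card{\BAR{G}_\gamma}=6$, $\card{\Gamma}=10$ and $\BAR{G}\cong\alt{5}$. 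That lower bound on the two-point stabilizer (your $C$) is exactly what your multiplier idea needs. The ring generated by the multipliers is then a field inside $A$ (since $1\in A$) of order greater than $q/2$, hence equal to $A$. The paper gets the field instead from Hua's identity, Lemma~\ref{sf.2}.

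\paragraph{The exceptional case is misplaced.} Because of the above, the $\alt{5}$ case cannot arise where you put it. $A$ is by definition the set of translation parameters of $Q$, so ``$Q$ a proper subgroup of $A$'' is meaningless. In the genuine $\alt{5}$ case, $Q$ has three orbits of length $3$ on the nine points of $\Gamma\setminus\{\gamma\}$. So $\Gamma=A\cup\{\infty\}$ fails and your construction of $\openE$ does not apply. That case needs its own argument, as in Lemmas~\ref{exc.2}--\ref{exc.4}. A commutator and trace computation shows that the parameter $l$ of Lemma~\ref{pl.3} satisfies $l^2=-1$; this is where $\sqrt{-1}\in\openF$ comes from. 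Orbit lengths then give $\Gamma\BAR{h}=\pl{\openE}$ with $\card{\openE}=9$.

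\paragraph{Descent from $\extF$ to $\openF$.} You never descend from $\extF$ to $\openF$. Your normalizing element lies in $\pgl{2}{\extF}$ and your field in $\extF$, whereas the statement demands $\openE\subseteq\openF$ and $\BAR{h}\in\pgl{2}{\openF}$. This requires $\Gamma\subseteq\pl{\openF}$, which is not automatic when $\openF$ is imperfect of characteristic $2$. For a nonsquare $t\in\openF$, the involution $x\mapsto t/x$ fixes only $\sqrt{t}\notin\openF$. The paper rules this out because such elements lie outside $\psl{2}{\openF}$, while $\BAR{G}\cong\psl{2}{\openE}$ with $\card{\openE}\ge 4$ is perfect. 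Lemma~\ref{sf.1}(a) then gives $\BAR{h}\in\pgl{2}{\openF}$.
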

A field $\extF$ with the property required in the statement of \thmA\ always exists.
For example, the algebraic closure of $\openF$.
It is straightforward to describe the groups appearing in (a),
they are Frobenius groups, dihedral groups or subgroups thereof.
As an application of \thmA\ we prove the following.

\begin{TheoremB}[Dickson \cite{D}]
    Suppose that $\openF$ is finite with odd characteristic and generator $l$.
    Let \[
        G = \bblistgen{ \PMatrixC{1 & 0 \\ 1 & 1}, \PMatrixC{1 & l \\ 0 & 1} }.
    \]
    Then either
    \begin{itemize}
        \item[(a)]  $G = \spl{2}{\openF}$ or

        \item[(b)]  $\card{\openF} = 9, l^{2} = -1$ and
                    $G$ is isomorphic to the double cover of $\alt{5}$.
    \end{itemize}
\end{TheoremB}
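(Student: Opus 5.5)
Apply \thmA\ to the image $\BAR{G}$ of $G$ in $\pgl{2}{\openF}$, taking $\extF$ to be the algebraic closure of $\openF$, and rule out every outcome except (c).

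\textbf{Setup.} Since $\openF$ is finite, $\openF$ is perfect and the generator $l$ means $\openF = \mathbb{F}_p(l)$, where $p = \Char\openF$ is odd. Both generating matrices have determinant $1$, so $G \le \spl{2}{\openF}$ and $\BAR{G} \le \psl{2}{\openF}$.

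\textbf{Excluding (a) and (b).} Let $\BAR{u}$ and $\BAR{v}$ be the images of the two generators. Each is a nontrivial unipotent element of order $p$, so $p$ divides $\card{\BAR{G}}$ and (b) fails. Outcome (a) also fails:
\begin{itemize}
  \item $\BAR{u}$ fixes exactly the point $0$ of $\pl{\extF}$, and $\BAR{v}$ fixes exactly $\infty$.
  \item A single point fixed by $\BAR{G}$ would have to be both $0$ and $\infty$, which is impossible.
  \item Suppose $\BAR{G}$ has an orbit $\listset{\alpha,\beta}$ of length $2$. Because $p$ is odd, each element of order $p$ must fix both $\alpha$ and $\beta$. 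But $\BAR{u}$ has only one fixed point, a contradiction.
\end{itemize}

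\textbf{Applying (c).} So (c) holds, and $\Gamma$ is a single orbit. Now $0 \in \Gamma$ and $\infty \in \Gamma$, since each is the unique fixed point of a nonidentity element. By (c), $\Gamma\BAR{h} = \pl{\openE}$ for some finite subfield $\openE \subseteq \openF$ and some $\BAR{h}$.

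\textbf{The main step: showing $\openE = \openF$ in case (i).} In case (i), $\BAR{G}^{\BAR{h}}$ is $\psl{2}{\openE}$ or $\pgl{2}{\openE}$.
\begin{itemize}
  \item Conjugating $\BAR{G}$ by $\BAR{v}^{k}$ keeps $\Gamma$ invariant, because $\Gamma$ is a $\BAR{G}$-orbit.
  \item Hence $\Gamma$ contains $0\BAR{v}^{k}$ for all $k$, which is $kl$ or $0$ according to the convention for the action. Likewise it contains the images of $\infty$ under $\BAR{u}^{k}$.
  \item Using products such as $\BAR{u}\BAR{v}$ applied to these points, I aim to show $\Gamma \supseteq \pl{\mathbb{F}_p(l)} = \pl{\openF}$. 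Then $\card{\openE} \ge \card{\openF}$, so $\openE = \openF$.
  \item Then $\card{\BAR{G}} \ge \card{\psl{2}{\openF}}$, which gives $\BAR{G} = \psl{2}{\openF}$.
\end{itemize}
The obstacle is producing enough points of $\Gamma$ to generate $\openF$. I would first try $\Gamma \supseteq$ the orbit of $\infty$, noting that $\Gamma$ is closed under the maps $x \mapsto x + l$ and $x \mapsto x/(x+1)$. Since $\Gamma\BAR{h}$ is the projective line of a field, $\Gamma$ carries a field structure transported by a Möbius map. I would use $0, \infty, l, 1$-type points to pin that field down and show it contains $l$.

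\textbf{Lifting to $\spl{2}{\openF}$.} It remains to pass from $\BAR{G} = \psl{2}{\openF}$ to $G = \spl{2}{\openF}$. Suppose $G$ does not contain $-1$. Then $G$ maps isomorphically onto $\psl{2}{\openF}$, so $\spl{2}{\openF}$ would split over its centre. This is impossible because $-1$ is the unique involution of $\spl{2}{\openF}$, while $\psl{2}{\openF}$ has involutions for $p$ odd.

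\textbf{Case (ii).} Here $\card{\openE} = 9$ and $\BAR{G} \isom \alt{5}$. Then $\openF$ contains $\mathbb{F}_9 = \openE$, and the point-generation argument above forces $\openF = \mathbb{F}_9$, with $l^{2} = -1$ coming from the identification with $\afiveBAR{\openF}$. The same involution argument shows $-1 \in G$, so $G$ is a central extension of $\alt{5}$ by $\pm 1$ with a unique involution. Hence $G$ is the double cover of $\alt{5}$, which is (b).
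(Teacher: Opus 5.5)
Your overall route is the paper's route through Lemma~\ref{ptab.1}: apply Theorem~A over the algebraic closure, exclude (a) and (b), use (c), then lift using that $-1$ is the unique involution of $\spl{2}{\openF}$. Your exclusion of (a) and (b) and your lifting step are fine.

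The gap is the step you yourself flag as the obstacle. You never actually prove $\openE = \openF$. The strategy you propose, producing all of $\pl{\openF}$ inside $\Gamma$ by composing the two generators, is essentially as hard as the theorem itself. The missing idea is the rigidity that comes from sharp $3$-transitivity, Lemma~\ref{sf.1}(a): if $\card{\pl{\openE} \cap \pl{\openE}\BAR{g}} \geq 3$ then $\BAR{g} \in \pgl{2}{\openE}$, so $\pl{\openE}\BAR{g} = \pl{\openE}$.

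You already have the right points. In your coordinates $\Gamma$ contains $0$ and $\infty$, as well as $1$ (the image of $\infty$ under $x \mapsto x/(x+1)$) and $l$ (the image of $0$ under $x \mapsto x+l$). Since $0, 1, \infty \in \pl{\openE}$, the set $\Gamma = \pl{\openE}\BAR{h}^{-1}$ meets $\pl{\openE}$ in at least three points. Hence $\BAR{h} \in \pgl{2}{\openE}$ and $\Gamma = \pl{\openE}$ exactly. Then $l \in \openE$, and since $l$ generates $\openF$, you get $\openE = \openF$ in both cases (i) and (ii). No further point generation or transported field structure is needed.

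There is a second gap in case (ii): $l^{2} = -1$ does not follow merely from $\BAR{G}$ being conjugate to $\afiveBAR{\openF}$. When $\card{\openF} = 9$ there are generators $l$ with $l^{2} \neq -1$, and for those $G = \spl{2}{\openF}$. So some argument using the specific generating matrices is required. The paper supplies it in Lemma~\ref{exc.3}:
\begin{itemize}
\item[(1)] Two elements $a, b$ of order $3$ generating $\alt{5}$ have $[a,b]$ conjugate to $a$ (Lemma~\ref{exc.2}).
\item[(2)] Lifting to $\spl{2}{\openF}$, where the centre is $\pm 1$, gives $\operatorname{trace}[u,v] = \pm 2$.
\item[(3)] Direct computation gives $\operatorname{trace}[u,v] = l^{2} + 2$.
\item[(4)] Since $l \neq 0$ and the characteristic is $3$, this forces $l^{2} = -1$.
\end{itemize}
Once that is in place, your unique-involution argument identifying $G$ as the nonsplit central extension of $\alt{5}$ by $\pm 1$ is acceptable. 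The paper instead obtains $G = \afive{\openF}$ and applies Lemma~\ref{exc.1}.
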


Huppert\cite[8.27, p.213]{Hu} also gives a proof of Dickson's classification,
which is then used by Huppert and Blackburn\cite[7.6, p.494]{HBII} to
give a proof of \thmB.
Suzuki\cite[6.21, p.409]{S} determines the finite subgroups of $\spl{2}{\openF}$
when $\openF$ is algebraically closed and obtains Dickson's results as a corollary.
Gorenstein\cite[8.4, p.44]{Gor} gives a presentation of Dickson's proof of \thmB\
in modern terminology.
In \cite{F}, we give a direct proof of \thmB,
using different ideas to the ones employed here.

The existing proofs of Dickson's classification are largely combinatorial
involving counting element of subgroups and cosets.
The proof of \thmA\ presented here is also combinatorial but the focus
is on the action of a subgroup of $\pgl{2}{\openF}$ on the projective line.

Allowing $\openF$ to be any field and considering subgroups of $\pgl{2}{\openF}$
rather than just $\psl{2}{\openF}$ causes no difficulty --
except for an amusing diversion when $\openF$ is imperfect with characteristic $2$.
Next we describe the sequence of reasoning that leads to the proofs presented here.

The starting point is the following elementary result.

\begin{LemmaOCL}[Cauchy--Frobenius]
    Let the finite group $G$ act on the finite set $\Omega$ and
    let $n$ be the number of orbits.
    Then \[
        n = \frac{1}{\card{G}} \sum_{g \in G} \card{\fix{\Omega}{g}}.
    \]
\end{LemmaOCL}

\noindent A trivial corollary is that if
$G$ is transitive on $\Omega$ and $\card{\Omega} > 1$ then
some element of $G$ acts fixed point freely.
To describe a nontrivial application,
we first make a definition.

\begin{UnnumberedDefinition}
    Let the group $G$ act on the set $\Omega$.
    Then $G$ is a \emph{platonic group in its action on $\Omega$}
    if the action is faithful,
    there are exactly three nonregular orbits and
    the possibilities for the isomorphism type of $G$ and
    the nonregular orbit lengths are given below. \[
        \begin{array}{c|ccc}
            G & l_{1} & l_{2} & l_{3} \\ \hline
            \alt{4} & 6 & 4 & 4 \\
            \sym{4} & 12 & 8 & 6 \\
            \alt{5} & 20 & 20 & 12
        \end{array}
    \]
\end{UnnumberedDefinition}

\begin{TheoremC}[Well known]
    Let the finite group $G \not= 1$ act on the set $\Omega$.
    Suppose that \[
        \card{\fix{\Omega}{g}} = 2
    \] for all $g \in G\nonid$.
    Then one of the following holds.
    \begin{itemize}
        \item[(a)]  $\card{\fix{\Omega}{G}} = 2$.

        \item[(b)]  There is an orbit of length $2$.

        \item[(c)]  $G$ is a platonic group in its action on $\Omega$.
    \end{itemize}
\end{TheoremC}

\noindent This result is of course classical.
It is used to classify the finite groups of rotations of $\openR^{3}$.
See for example \cite[3.6]{Co} or \cite[chapter~6]{LW}.

The reader may be wondering what this has to do with the subgroups of $\pgl{2}{\openF}$.
Recall that the projective line over $\openF$,
which we denote by $\pl{\openF}$,
is the set of $1$-dimensional subspaces of $\openF^{2}$.
Then $\gl{2}{\openF}$ acts on $\pl{\openF}$.
The kernel of this action is $\zenter{\gl{2}{\openF}}$ so
$\pgl{2}{\openF}$ acts faithfully on $\pl{\openF}$.
This action is in fact sharply $3$-transitive.
In particular if $\BAR{g} \in \pgl{2}{\openF}\nonid$ then \[
    \card{\fix{\pl{\openF}}{\BAR{g}}} \in \listset{0,1,2}.
\]
It is elementary to show that
\begin{equation} \tag{1}
    \mbox{\em if $\BAR{g}$ has finite order and
        $\card{\fix{\pl{\openF}}{\BAR{g}}} = 1$ then
        $\BAR{g}$ has order $\Char\openF > 0$.
    }
\end{equation}
This suggests using the \ocl\ to study the finite subgroups of $\pgl{2}{\openF}$.
Fixed point free elements present an obstacle.
The solution is to extend the field.
Suppose that $\extF$ is an extension of $\openF$.
Of course $\gl{2}{\openF} \leq \gl{2}{\extF}$ and hence
we may regard $\pgl{2}{\openF}$ as a subgroup of $\pgl{2}{\extF}$.
Thus we have an action of $\pgl{2}{\openF}$ on $\pl{\extF}$.
If $\extF$ is algebraically closed then since every element of $\gl{2}{\openF}$
has an eigenvalue in $\extF$ we have
\begin{equation} \tag{2}
    \card{\fix{\pl{\openF}}{\BAR{g}}} \in \listset{1,2}
\end{equation}
for all $\BAR{g} \in \pgl{2}{\openF}\nonid$.
If $\openF$ is finite with order $q$ then we could
take $\extF$ to be finite of order $q^{2}$.

Observe that $(1), (2)$ and \thmC\ prove \thmA\ in the case that
$\Char\openF$ does not divide $\card{\BAR{G}}$.
For completeness therefore we include a proof of \thmC.
Moreover it also sets the scene for the following result,
which is key to proving \thmA\ in the case that
$\Char\openF$ divides $\card{\BAR{G}}$.

\newpage

\begin{TheoremD}
    Let the finite group $G$ act on the set $\Omega$.
    Assume the following.
    \begin{itemize}
        \item   $\card{\fix{\Omega}{g}} \in \listset{1,2}$ for all $g \in G\nonid$.

        \item   $\card{\fix{\Omega}{g}} = 1$ for some $g \in G\nonid$.

        \item   Each orbit has length at least $3$.
    \end{itemize}
    Set \[
        \Gamma = \bset{ \gamma \in \Omega }{\mbox{%
            $\listset{\gamma} = \fix{\Omega}{g}$ for some $g \in G\nonid$
        }}
    \]
    and for each $\gamma \in \Gamma$ let \[
        K_{\gamma} = \bset{ g \in G_{\gamma}\nonid }{%
            \fix{ \Omega \remove{\gamma} }{g} = \emptyset
        } \cup \blistset{1}.
    \]
    Fix $\gamma \in \Gamma$ and let $q = \card{K_{\gamma}}$.

    Then $\Gamma$ is an orbit,
    there is exactly one other nonregular orbit $\Delta$ and
    one of the following holds.
    \begin{itemize}
        \item[(a)]  $G$ is $2$-transitive on $\Gamma$,
                    $q \geq 3$,
                    $\card{\Gamma} = q + 1, \card{\Delta} = q(q-1)$
                    and if $\gamma' \in \Gamma \remove{\gamma}$
                    then $\card{G_{\gamma\gamma'}} = \half(q-1)$ or $q-1$.

        \item[(b)]  $G \isom \alt{5}, G_{\gamma} \isom \sym{3}$,
                    $K_{\gamma} \isom \cyclic{3}, q = 3, \card{\Gamma} = 10$ and
                    $\card{\Delta} = 12$.
    \end{itemize}
\end{TheoremD}

It follows from Frobenius' Theorem that the set $K_{\gamma}$
is a normal subgroup of $G_{\gamma}$.
However this fact is not needed in the proof of \thmD.

We mention that Cameron~\cite{C} and Iwahori~\cite{I} have also
proved variations of \thmC.
The author has been unable to establish of \thmD\ has been proved elsewhere.

In \S\ref{sf} we establish a characterization of the subfield subgroups
$\psl{2}{\openE}$ and $\pgl{2}{\openE}$.
This is used to show that case~(a) of \thmD\ gives case~(c)(i) of \thmA.
The argument used differs from the corresponding arguments of
Dickson, Huppert and Suzuki.

To complete the proof of \thmA\ we must deal with case~(b) of \thmD.
To this end we make the following definition.

\begin{UnnumberedDefinition}
    Whenever $\Char\openF = 3$ and
    $\openF$ contains a square root $i$ of $-1$,
    define \[
        \afive{\openF} = \bblistgen{ \PMatrixC{ 1 & 0 \\ 1 & 1}, \PMatrixC{ 1 & i \\ 0 & 1} }
    \]
    and $\afiveBAR{\openF}$ to be the image of $\afive{\openF}$ in $\pgl{2}{\openF}$.
\end{UnnumberedDefinition}

Using the isomorphism $\psl{2}{9} \isom \alt{6}$ it follows readily that
$\afive{\openF}$ is isomorphic to the double cover of $\alt{5}$.
See \S\ref{exc}.

Once \thmA\ has been proved,
\thmB\ follows quickly.

It is natural to ask if there are any further examples of groups that satisfy
the hypotheses of \thmD.
The answer is no.
Using a theorem of Zassenhaus we will establish the following,
which may be viewed as a characterization of the finite groups $\pgl{2}{\openF}$
and their subgroups in terms of permutation groups.
Again, the author has been unable to establish whether this result is already known.

\begin{TheoremE}
    Let the finite group $G$ act on the set $\Omega$.
    Assume that \[\mbox{
        $\card{\fix{\Omega}{g}} \in \listset{1,2}$ for all $g \in G\nonid$.
    }\]
    Then one of the following holds.
    \begin{itemize}
        \item[(a)]  There is an orbit of length one or two.

        \item[(b)]  $G$ is a platonic group in its action on $\Omega$.

        \item[(c)]  There exists a finite field $\openF$ such that
                    $G \isom \psl{2}{\openF}$ or $\pgl{2}{\openF}$.
                    More precisely
                    \begin{itemize}
                        \item[(i)]  there exists a nonregular orbit $\Gamma$ which may be identified
                                    with $\openF^{\infty}$ so that the elements of $G$ induce
                                    linear fractional transformations of $\Gamma$;

                        \item[(ii)] $G$ acts faithfully on $\Gamma$ and induces
                                    $\psl{2}{\openF}$ or $\pgl{2}{\openF}$ on $\Gamma$; and

                        \item[(iii)]there is exactly one other nonregular orbit and
                                    it has length \break \mbox{$\card{\openF}(\card{\openF} - 1)$}.
                    \end{itemize}

        \item[(d)]  $G \isom \alt{5}$,
                    there are exactly two nonregular orbits and
                    they have lengths $10$ and $12$.
    \end{itemize}
\end{TheoremE}

%% file: plat.tex
\section{The platonic case} \label{plat}

The proof of the following elementary result
is left to the reader.

\begin{Lemma} \label{plat.1}
    Let $G$ be a group is which every element has prime power order.
    If $\card{G} = 12, 24$ or $60$ then $G$ is
    isomorphic to $\alt{4}, \sym{4}$ or $\alt{5}$ respectively.
\end{Lemma}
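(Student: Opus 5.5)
The plan is a case-by-case Sylow argument. The hypothesis that every element has prime power order is used through one observation: an element of order $p$ and an element of order $r\neq p$ can never commute, since their product would have order $pr$. Consequently, if $P$ is a nontrivial $p$-subgroup, every $r$-element with $r\neq p$ meets $\cent{G}{P}$ trivially, so $\cent{G}{P}$ is a $p$-group. (I will write $\cent{G}{P}$ and $\norm{G}{P}$ for centralizer and normalizer. These macros are assumed to be provided by \texttt{mymacros}, like $\card{\cdot}$; if they are not, ordinary notation $C_G(P)$, $N_G(P)$ should be substituted.)

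\emph{Orders $12$ and $24$.} Let $P$ be a Sylow $3$-subgroup, so $\card{P}=3$.
\begin{itemize}
\item Suppose $P\trianglelefteq G$. Then $G/\cent{G}{P}$ embeds in $\mathrm{Aut}(P)\cong\cyclic{2}$, while $\cent{G}{P}$ is a $3$-group by the observation, hence equals $P$. This forces $\card{G}\le 6$, a contradiction.
\item So $n_{3}=4$, and $\card{\norm{G}{P}}=\card{G}/4$, which is $3$ or $6$.
\item When $\card{G}=24$, the group $\norm{G}{P}$ cannot be cyclic of order $6$, so $\norm{G}{P}\cong\sym{3}$.
\item Consider the conjugation action of $G$ on its four Sylow $3$-subgroups, giving a homomorphism $G\to\sym{4}$. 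Its kernel $K$ is normal and contained in $\norm{G}{P}$.
\item If $3$ divides $\card{K}$, then $K$ contains every Sylow $3$-subgroup of $G$. But $K\le\norm{G}{P}$ contains only one subgroup of order $3$, namely $P$, so $n_{3}=1$, a contradiction.
\item If $\card{K}=2$, then $K$ is a normal subgroup of order $2$, hence central. It therefore commutes with an element of order $3$, again a contradiction.
\item Hence $K=1$ and $G$ embeds in $\sym{4}$. For $\card{G}=24$ this gives $G\cong\sym{4}$. For $\card{G}=12$ the image has index $2$ in $\sym{4}$, so it is $\alt{4}$.
\end{itemize}

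\emph{Order $60$.} First I show that $G$ has no nontrivial normal $p$-subgroup $N$. Suppose it did, and let $r\neq p$ be a prime dividing $\card{G}$ with Sylow $r$-subgroup $R$.
\begin{itemize}
\item By the observation, every nontrivial element of $R$ acts fixed-point-freely on $N\remove{1}$ by conjugation. So every $R$-orbit on $N\remove{1}$ is regular, and $\card{R}$ divides $\card{N}-1$.
\item Take $p=2$, so $\card{N}\in\listset{2,4}$. We would need $15\mid\card{N}-1$, which is impossible.
\item Take $p=3$, so $\card{N}=3$. We would need $4\mid 2$, which is impossible.
\item Take $p=5$, so $\card{N}=5$. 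We would need $3\mid 4$, which is impossible.
\end{itemize}
A minimal normal subgroup of a finite group is either elementary abelian or a direct product of isomorphic nonabelian simple groups. So if $G$ had a proper nontrivial normal subgroup, any minimal normal subgroup $M\le G$ would be of the second type with $\card{M}<60$ dividing $60$. There is no nonabelian simple group of order less than $60$, so this cannot happen. Hence $G$ is simple. Finally I invoke the classical fact that a simple group of order $60$ is isomorphic to $\alt{5}$. It is proved by letting $G$ act on its cosets of a subgroup of index $5$ to embed it in $\alt{5}$; such a subgroup is found from $n_{2}\in\listset{5,15}$ and a short count.

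The main obstacle I expect is the order-$60$ case. The reduction to simplicity is easy using the fixed-point-free trick above, but the last step relies on two standard external facts: there is no nonabelian simple group of order below $60$, and every simple group of order $60$ is $\alt{5}$. Since the paper leaves this lemma to the reader, quoting these facts seems acceptable. If a self-contained argument were wanted, I would instead count elements directly:
\begin{itemize}
\item One gets $n_{5}=6$ (24 elements of order $5$) and $n_{3}=10$ (20 elements of order $3$), leaving $15$ nonidentity $2$-elements.
\item From this, $n_{2}=5$, so $G$ acts on its five Sylow $2$-subgroups, giving a map to $\sym{5}$.
\item This map is injective and its image is $\alt{5}$, as follows from the absence of normal $p$-subgroups established above.
\end{itemize}
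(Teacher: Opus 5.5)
The paper gives no proof of this lemma (it is left to the reader), so there is no argument to compare against. Your proposal is correct, and the orders $12$ and $24$ are complete as written.

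For order $60$, your first route rests on two classical facts: there is no nonabelian simple group of order less than $60$, and a simple group of order $60$ is $A_5$. These are at least as deep as the lemma itself, so the element count you sketch at the end fits better with a result the author calls elementary. That count does not need simplicity, only your absence of normal $p$-subgroups. Three steps should be made explicit, and each follows from your centralizer observation.

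\emph{Getting $n_3 = 10$.} You must exclude $n_3 = 4$. As in the order-$24$ case, $C_G(P) = P$ and $N_G(P)/C_G(P)$ embeds in $\mathrm{Aut}(P)$. Hence $|N_G(P)| \le 6$ and $n_3 \ge 10$.

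\emph{Getting $n_2 = 5$.} This does not follow from having $15$ nonidentity $2$-elements alone; without more information, $n_2 = 15$ is not ruled out. You need the Sylow $2$-subgroups to intersect trivially. Let $S$ be a Sylow $2$-subgroup (of order $4$) and $1 \neq x \in S$. Since $S$ is abelian, $S \le C_G(x)$. Since $C_G(x)$ is a $2$-group, $C_G(x) = S$. So $S$ is the only Sylow $2$-subgroup containing $x$, and therefore $3n_2 = 15$.

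\emph{Injectivity of $G \to S_5$.} The kernel lies in $N_G(S)$, which has order $12$. If $3$ divides the order of the kernel, it contains all ten Sylow $3$-subgroups, that is, $20$ elements of order $3$, which is impossible. Otherwise the kernel is a normal $2$-subgroup, which you have already excluded. The image then has index $2$ in $S_5$, so it is $A_5$.
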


\begin{proof}[Proof of \thmC]
    We have a finite group $G \not= 1$ acting on the set $\Omega$
    with the property
    $\card{ \fix{\Omega}{g} } = 2$ for all $g \in G\nonid$.
    Then there are nonregular orbits and, since $G$ is finite, only finitely many.
    Denote them by $\Omega_{1}, \ldots, \Omega_{n}$.
    We discard any regular orbits since they affect neither the hypothesis nor the conclusion.
    The \ocl\ gives $n\card{G} = \card{\Omega} + 2(\card{G} - 1)$ whence
    \begin{equation} \tag{1}
        (n - 2)\card{G} + 2 = \card{\Omega} = \card{\Omega_{1}} + \cdots \card{\Omega_{n}}.
    \end{equation}
    If $n = 1$ then $2 = \card{G} + \card{\Omega}$ so $\card{G} = 1$,
    contrary to hypothesis.
    If $n = 2$ then $\card{\Omega_{1}} = \card{\Omega_{2}} = 1$
    and Conclusion~(a) holds.
    Hence we assume that $n \geq 3$.

    For each $i$,
    let $s_{i}$ be the order of the stabilizer of an element of $\Omega_{i}$.
    Since $\Omega_{i}$ is nonregular we have
    $2 \leq s_{i} = \card{G}/\card{\Omega_{i}}$.
    Without loss, $s_{1} \leq \cdots \leq s_{n}$.

    Dividing $(1)$ by $\card{G}$ we obtain
    \begin{equation} \tag{2}
        n - 2 + \frac{2}{\card{G}} = \frac{1}{s_{1}} + \cdots \frac{1}{s_{n}}.
    \end{equation}
    Then $n - 2 < n/s_{1}$ so \[
        2 \leq s_{1} < \frac{n}{n-2} = 1 + \frac{2}{n-2}.
    \]
    This forces \[
        n = 3 \qtext{and} s_{1} = 2.
    \]
    Then $(2)$ becomes
    \begin{equation} \tag{3}
        \frac{1}{2} + \frac{2}{\card{G}} = \frac{1}{s_{2}} + \frac{1}{s_{3}}.
    \end{equation}
    In particular $1/2 < 2/s_{2}$ so $s_{2} \in \listset{2,3}$.
    If $s_{2} = 2$ then $2/\card{G} = 1/s_{3}$ so
    $\card{\Omega_{3}} = \card{G}/s_{3} = 2$ and Conclusion~(b) holds.
    Hence we assume that \[
        s_{2} = 3.
    \]
    Then $(3)$ becomes
    \begin{equation} \tag{4}
        \frac{1}{6} + \frac{2}{\card{G}} = \frac{1}{s_{3}}.
    \end{equation}
    In particular $1/6 < 1/s_{3}$ so $3 = s_{2} \leq s_{3} < 6$ and \[
        s_{3} \in \listset{3,4,5}.
    \]
    Since every element belongs to a stabilizer,
    we have shown that every element has prime power order.

    \medskip

    \noindent {\bf The case $\mathbf{s_{3} = 3}$.}
    Then $(4)$ forces $\card{G} = 12$.
    Hence $\card{\Omega_{1}} = 6$ and $\card{\Omega_{2}} = 4$
    and $\card{\Omega_{3}} = 4$.

    \medskip

    \noindent {\bf The case $\mathbf{s_{3} = 4}$.}
    Then $(4)$ forces $\card{G} = 24$.
    Hence $\card{\Omega_{1}} = 12, \card{\Omega_{2}} = 8$ and $\card{\Omega_{3}} = 6$.

    \medskip

    \noindent {\bf The case $\mathbf{s_{3} = 5}$.}
    Then $(4)$ forces $\card{G} = 60$.
    Hence $\card{\Omega_{1}} = 30, \card{\Omega_{2}} = 20$ and $\card{\Omega_{3}} = 12$.

    \medskip

    \noindent Applying Lemma~\ref{plat.1},
    in each case $G$ is a platonic group in its action on $\Omega$,
    completing the proof.
\end{proof}

Iwahori\cite{I} obtains more information when Conclusion~(b) holds.

%% file: stab.tex
\section{Stabilizers} \label{stab}

The following lemma will be used to
analyze the stabilizer of a point in the proof of \thmD.
The reader familiar with Frobenius groups will see,
that apart from trivial cases,
the group $G$ is an Frobenius group and
$K$ is its Frobenius kernel.
However, none of that theory is required.

\begin{Lemma} \label{stab.1}
    Suppose the finite group $G$ acts on the finite set $\Omega$
    and that $\card{\fix{\Omega}{g}} \in \listset{0,1}$
    for all $g \in G\nonid$.
    Let \[
        K = \bset{ g \in G }{ \fix{\Omega}{g} = \emptyset } \cup \blistset{1}.
    \]
    Then there is an orbit of length $\card{K}$ and
    any remaining orbits are regular.
\end{Lemma}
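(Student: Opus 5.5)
We will prove Lemma~\ref{stab.1} with the \ocl, in the same style as the proof of \thmC: we bound how many orbits there can be and then identify which ones are nonregular. Throughout, let $k = \card{K}$.

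Partition $G$ into $K$ and $G \setminus K$. Every $g \in G \setminus K$ is nontrivial and has a fixed point, so it has exactly one. The identity fixes all of $\Omega$, and the nonidentity elements of $K$ fix nothing. Discard the regular orbits, since they affect neither hypothesis nor conclusion, and suppose there are $n$ nonregular orbits $\Omega_{1}, \ldots, \Omega_{n}$. The \ocl\ then gives
\[
    n\card{G} = \card{\Omega} + (\card{G} - k).
\]
If $n = 0$ then $\Omega = \emptyset$ and $k = \card{G}$. In that case we read the statement with the convention that there is nothing further to prove, or else apply the count to the original $\Omega$ with its regular orbits included.

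Now assume $n \geq 1$ and let $s_{i} \geq 2$ be the order of a point stabilizer in $\Omega_{i}$, so that $\card{\Omega_{i}} = \card{G}/s_{i}$. Dividing the count by $\card{G}$ gives
\[
    n - 1 + \frac{k}{\card{G}} = \sum_{i=1}^{n} \frac{1}{s_{i}} \leq \frac{n}{2}.
\]
Hence $n/2 \leq 1 - k/\card{G} < 1$, which forces $n \leq 1$, so $n = 1$. With $n = 1$ the equation reads $k/\card{G} = 1/s_{1}$, that is, $\card{\Omega_{1}} = \card{G}/s_{1} = k$. So the unique nonregular orbit has length $\card{K}$, and every other orbit is regular, as claimed.

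The only delicate point is the degenerate case in which every orbit is regular. There every nonidentity element is fixed point free, so $K = G$, and the claimed orbit of length $\card{K} = \card{G}$ is simply any regular orbit. This requires $\Omega \neq \emptyset$. In the paper's application $\Omega$ is nonempty, and the lemma is implicitly read that way, so I would state this assumption explicitly. The same remark covers the trivial case $G = 1$, where any orbit has length $1 = \card{K}$.
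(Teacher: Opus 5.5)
Your proof is correct and is essentially the paper's argument. You discard the regular orbits, apply the Orbit Counting Lemma to get $n\card{G} = \card{\Omega} + \card{G} - \card{K}$, use that every nonregular orbit has length at most $\card{G}/2$ (your $s_{i} \geq 2$) to force $n = 1$, and read off $\card{\Omega} = \card{K}$, treating the all-regular case separately as the paper does. Your remark that this degenerate case tacitly needs $\Omega \neq \emptyset$ is also correct: the paper leaves it implicit, and it holds in every application, e.g.\ $G_{\gamma}$ acting on $\Omega \setminus \{\gamma\}$ in the proof of Theorem~D.
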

\begin{proof}
    If all the orbits are regular then
    they all have length $\card{G}$ and $K = G$
    so the conclusion holds in this case.
    Hence we discard any regular orbits and
    suppose that $\Omega$ consists of $n \geq 1$ nonregular orbits.
    The \ocl\ yields
    \begin{align*}
        n\card{G} &= \card{\Omega} + \card{G \setminus K} \\
    \intertext{whence}
        (n-1)\card{G} &= \card{\Omega} - \card{K}.
    \end{align*}
    The length of any nonregular orbit is a proper divisor of $\card{G}$
    whence $\card{\Omega} \leq (n/2)\card{G}$ and then \[
        (n-1)\card{G} < \frac{n}{2}\card{G}.
    \]
    This forces $n = 1$.
    Then $\card{\Omega} = \card{K}$ and the proof is complete.
\end{proof}

%% file: np.tex
\section{The nonplatonic case} \label{np}

We prove \thmD.
Thus we have a finite group $G$ acting on a set $\Omega$
satisfying the following.

\begin{itemize}
    \item   $\card{\fix{\Omega}{g}} \in \listset{1,2}$ for all $g \in G\nonid$.

    \item   $\card{\fix{\Omega}{g}} = 1$ for some $g \in G\nonid$.

    \item   Each orbit has length at least $3$.
\end{itemize}
Set \[
    \Gamma = \bset{ \gamma \in \Omega }{\mbox{%
        $\listset{\gamma} = \fix{\Omega}{g}$ for some $g \in G\nonid$
    }}
\]
and for each $\gamma \in \Gamma$ let \[
    K_{\gamma} = \bset{ g \in G_{\gamma}\nonid }{%
            \fix{ \Omega \remove{\gamma} }{g} = \emptyset
        } \cup \listset{1}.
\]

Notice that $\Gamma$ is a union of orbits,
each of which is nonregular.
By hypothesis $\Gamma \not= \emptyset$.
We discard any regular orbits since
they affect neither the conclusion nor the hypothesis.
Since $\card{\fix{\Omega}{g}} \in \listset{1,2}$ for all $g \in G\nonid$
it now follows that $\Omega$ is finite.

Set \[
    \Delta = \Omega \setminus \Gamma.
\]
Let $m$ and $n$ be the numbers of orbits contained
in $\Delta$ and $\Gamma$ respectively.
Then $n \geq 1$.
Since every element of $G$ has a fixed point on $\Omega$
it follows that $G$ is not transitive.
Hence $m + n \geq 2$.

For each $i \in \listset{1,2}$ set \[
    G_{i} = \bset{ g \in G\nonid }{ \card{\fix{\Omega}{g}} = i }
\]
so that
\begin{equation} \tag{1}
     G = 1 \dotcup G_{1} \dotcup G_{2}.
\end{equation}
The definitions of $G_{1}$ and $\Gamma$
imply that $\card{G_{1}} \geq \card{\Gamma}$.

The \ocl\ and (1) yield
\begin{align}
    (m+n)\card{G} &= \card{\Omega} + \card{G_{1}} + 2\card{G_{2}} \notag \\
                  &= \card{\Delta} + \card{\Gamma} + \card{G_{1}} + 2(\card{G} - \card{G_{1}} - 1) \notag \\
\intertext{so}
    (m+n-2)\card{G} &= \card{\Delta} - (\card{G_{1}} - \card{\Gamma}) - 2 \notag \\
                    &< \card{\Delta}. \tag{2}
\end{align}
Since $\Delta$ consists of $m$ nonregular orbits
we have $\card{\Delta} \leq (m/2)\card{G}$ and so \[
    \frac{m}{2} + n < 2.
\]
Now $n \geq 1$ and $m + n \geq 2$ so
this forces $n = m = 1$.
We have shown that \[\mbox{
    $\Gamma$ and $\Delta$ are both orbits.
}\]
Moreover from (2) we have
\begin{equation} \tag{3}
    \card{G_{1}} + 2 = \card{\Gamma} + \card{\Delta}.
\end{equation}

Fix $\gamma \in \Gamma$ and set $q = \card{K_{\gamma}}$.
We have $G_{1} = \dotcup_{\alpha \in \Gamma} \bigl( G_{1} \cap G_{\alpha} \bigr)$.
Now $\Gamma$ is an orbit so
$\card{G_{1} \cap G_{\alpha}} = \card{G_{1} \cap G_{\gamma}}$
for all $\alpha \in \Gamma$ whence
\begin{equation} \tag{4}
    \card{G_{1}} = \card{G_{1} \cap G_{\gamma}}\card{\Gamma}.
\end{equation}
Also $G_{1} \cap G_{\gamma} = K_{\gamma}\nonid$ so $(3)$ and $(4)$ yield
\begin{equation} \tag{5}
    (q-2)\card{\Gamma} + 2 = \card{\Delta}.
\end{equation}
By hypothesis,
each orbit of $G$ has length at least $3$ so $\card{\Delta} \geq 3$
and then $q \geq 3$.

Multiplying $(5)$ by $\card{G_{\gamma}}$ gives
$(q-2)\card{G} + 2\card{G_{\gamma}} = \card{\Delta}\card{G_{\gamma}}$.
Now $\Delta$ is an orbit so $\card{\Delta}$ divides $\card{G}$,
whence
\begin{equation} \tag{6}
    \card{\Delta} \qtext{divides} 2\card{G_{\gamma}}.
\end{equation}

Consider the action of $G_{\gamma}$ on $\Omega \remove{\gamma}$.
Now $\card{\fix{\Omega \remove{\gamma}}{g}} \in \listset{0,1}$
for all $g \in G_{\gamma}\nonid$ so Lemma~\ref{stab.1},
with $G_{\gamma}$ in the role of $G$ and
$\Omega \remove{\gamma}$ in the role of $\Omega$,
implies that $G_{\gamma}$ has one orbit of length $q$ and
any remaining orbits are regular.
Since $\Gamma \remove{\gamma}$ is
a union of orbits of $G_{\gamma}$ we have
\begin{equation} \tag{7}
    \card{\Gamma} = 1 + aq + b\card{G_{\gamma}}
\end{equation}
for some integers $a \in \listset{0,1}$ and $b \geq 0$.

From $(5), (6)$ and $(7)$ we have
\begin{equation} \tag{8}
    (q-2)(1 + aq + b\card{G_{\gamma}}) + 2 = \card{\Delta} %
    \qtext{divides} 2\card{G_{\gamma}}.
\end{equation}
Since $q \geq 3$ this forces $b \in \listset{0,1}$.
Recall that $\Gamma$ is an orbit of $G$ and
by hypothesis each orbit has length at least $3$.
Thus $(7)$ implies that $(a,b) \not= (0,0)$.
We have three cases.

\medskip

\noindent {\bf The case $\mathbf{(a,b) = (1,0)}$.}
Then $G_{\gamma}$ has one orbit on $\Gamma \remove{\gamma}$ and
it has length $q$.
Hence $G$ is $2$-transitive on $\Gamma$ and
$\card{\Gamma} = q + 1$.
Then $(5)$ implies that $\card{\Delta} = (q-2)(q+1) + 2 = q(q-1)$.
Let $\gamma' \in \Gamma \remove{\gamma}$ so that
$\card{G_{\gamma}} = \card{G_{\gamma\gamma'}}q$ because
$G_{\gamma}$ is transitive on $\Gamma \remove{\gamma}$.
Then $(6)$ implies that $q-1$ divides $2\card{G_{\gamma\gamma'}}$.
No nonidentity element of $G$ fixes $3$ points so
the action of $G_{\gamma\gamma'}$ on $\Gamma \remove{\gamma,\gamma'}$
is semiregular.
Hence $\card{G_{\gamma\gamma'}}$ divides $q-1$.
It follows that $\card{G_{\gamma\gamma'}} = \half(q-1)$ or $q-1$.
Conclusion~(a) holds.

\medskip

\noindent {\bf The case $\mathbf{(a,b) = (0,1)}$.}
Then $(8)$ yields \[
    (q-2)(1 + \card{G_{\gamma}}) + 2 \qtext{divides} 2\card{G_{\gamma}}.
\]
Hence $q = 3 = \card{G_{\gamma}}$.
Then in $(7)$ we may replace $(a,b)$ by $(1,0)$ and apply the previous case.

\medskip

\noindent {\bf The case $\mathbf{(a,b) = (1,1)}$.}
From $(8)$ we have $q = 3$ and
$1 + 3 + \card{G_{\gamma}} + 2 = \card{\Delta} = 2\card{G_{\gamma}}$ so
$\card{G_{\gamma}} = 6$ and $\card{\Delta} = 12$.
Now $G_{\gamma}$ has an orbit of length $q = 3$ and
since no nonidentity element of $G$ fixes $3$ points we have $G_{\gamma} \isom \sym{3}$.
From $(7)$, $\card{\Gamma} = 10$ so
$\card{G} = \card{G_{\gamma}}\card{\Gamma} = 60$.

Let $\delta \in \Delta$ so that $\card{G_{\delta}} = 5$.
Recall that $\Gamma$ and $\Delta$ are the orbits of $G$
and that each element of $G$ has at least one fixed point.
Then each nonidentity element of $G$ is contained in  a
conjugate of $G_{\gamma}$ or $G_{\delta}$ and
so has order $2,3$ or $5$.
Lemma~\ref{plat.1} implies that $G \isom \alt{5}$
so Conclusion~(b) holds and
the proof of \thmD\ is complete.

%% file: pl.tex
\section{The projective line} \label{pl}

We remind the reader of some basic properties
of the action of $\pgl{2}{\openF}$ on
the projective line $\pl{\openF}$.
Recall that $\pl{\openF}$ is
the set of $1$-dimensional subspaces of $\openF^{2}$.
For each $(x,y) \in \openF^{2} \remove{(0,0)}$
let $[x:y]$ be the subspace spanned by $(x,y)$.
Then $[x:y] = [x':y']$ if and only if $xy' = yx'$.
Consequently \[
    \pl{\openF} = \bset{  [x:1] }{ x \in \openF } \cup \blistset{ [1:0] }.
\]

The group $\gl{2}{\openF}$ acts on $\pl{\openF}$.
The kernel of this action is $\zenter{\gl{2}{\openF}}$.
Thus $\pgl{2}{\openF}$ acts faithfully on $\pl{\openF}$.
This action is sharply $3$-transitive.
Since $\spl{2}{\openF} \cap \zenter{\gl{2}{\openF}} = \zenter{\spl{2}{\openF}}$
we may abuse notation to suppose that $\psl{2}{\openF} \normal \pgl{2}{\openF}$.

We will have occasion to work simultaneously in $\gl{2}{\openF}$
and $\pgl{2}{\openF}$.
Bar notation $\BAR{H}$ will be used to refer to subgroups of $\pgl{2}{\openF}$.
Moreover, if $H \leq \gl{2}{\openF}$ we use $\BAR{H}$ to denote
the image of $H$ in $\pgl{2}{\openF}$.
Similarly for elements.

Next we introduce some subgroups that play a key role.
Let $\alpha \in \pl{\openF}$.
Now $\alpha$ is a subspace of $\openF^{2}$ so
$\gl{2}{\openF}_{\alpha}$, the stabilizer of $\alpha$,
acts on both $\alpha$ and the quotient space $\openF^{2} / \alpha$.
Define \[
    \unip{\alpha}{\openF} = \bset{ g \in \gl{2}{\openF} }{\mbox{%
        $g$ acts trivially on $\alpha$ and on $\openF^{2} / \alpha$ %
        }}
\]
and let \[\mbox{%
    $\unipBAR{\alpha}{\openF}$ be the image of $\unip{\alpha}{\openF}$ in $\pgl{2}{\openF}$. %
}\]
Then $\unip{\alpha}{\openF} \normal \gl{2}{\openF}_{\alpha}$ and
$\unipBAR{\alpha}{\openF} \normal \pgl{2}{\openF}_{\alpha}$.

\begin{Lemma} \label{pl.1}
    Let $\alpha \in \pl{\openF}$ and $g \in \gl{2}{\openF}_{\alpha}$.
    \begin{itemize}
        \item[(a)]  $\unip{\alpha}{\openF} \isom \unipBAR{\alpha}{\openF} \isom \openF\additive$.

        \item[(b)]  The actions of $\unip{\alpha}{\openF}$ and $\unipBAR{\alpha}{\openF}$
                    on $\pl{\openF} \remove{\alpha}$ are regular.

        \item[(c)]  \begin{itemize}
                        \item[(i)]  If $\BAR{g} \in \unipBAR{\alpha}{\openF}\nonid$ then
                                    $\BAR{g}$ is fixed point free on
                                    $\pl{\openF} \remove{\alpha}$ and
                                    $\listgen{ \BAR{g} }$ is isomorphic to
                                    a subgroup of $\openF\additive$.

                        \item[(ii)] If $\BAR{g} \not\in \unipBAR{\alpha}{\openF}$ then
                                    $\BAR{g}$ has a unique fixed point on
                                    $\pl{\openF} \remove{\alpha}$ and
                                    $\listgen{ \BAR{g} }$ is isomorphic to
                                    a subgroup of $\openF\mult$.
                    \end{itemize}

        \item[(d)]  If $\alpha = [1:0]$ then \[
                        \unip{\alpha}{\openF} = %
                        \bbset{ \PMatrixC{ 1 & 0 \\ c & 1 } }{ c \in \openF }.
                    \]
    \end{itemize}
\end{Lemma}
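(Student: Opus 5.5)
The plan is to reduce everything to explicit matrices by choosing coordinates. Since $\gl{2}{\openF}$ is transitive on $\pl{\openF}$ and conjugation carries $\unip{\alpha}{\openF}$ to $\unip{\alpha h}{\openF}$, I will first treat $\alpha = [1:0]$ and then transport the conclusions to arbitrary $\alpha$. With row vectors acting on the right, $[1:0]g = [g_{11}:g_{12}]$, so $g$ stabilizes $\alpha$ exactly when $g_{12}=0$, i.e.\ $g = \PMatrixC{a & 0 \\ c & d}$ with $ad\ne 0$. On $\alpha$ this acts by the scalar $a$, and on $\openF^{2}/\alpha$ by $d$. Hence $g$ is trivial on both exactly when $a=d=1$, which is (d).

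For (a), the map $c \mapsto \PMatrixC{1&0\\c&1}$ is a homomorphism from $\openF\additive$ (the product of two such matrices adds the $c$'s) and is injective. The only scalar matrix in $\unip{\alpha}{\openF}$ is the identity, so the projection to $\pgl{2}{\openF}$ is injective on $\unip{\alpha}{\openF}$, giving $\unip{\alpha}{\openF}\isom\unipBAR{\alpha}{\openF}$. For (b), identify $\pl{\openF}\remove{\alpha}$ with $\openF$ via $[x:1]$. A computation gives $[x:1]\PMatrixC{a&0\\c&d} = [ax+c : d] = [(ax+c)/d : 1]$, so $g$ acts as the affine map $x\mapsto (ax+c)/d$. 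Elements of $\unip{\alpha}{\openF}$ act as the translations $x\mapsto x+c$, and these act regularly on $\openF$.

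For (c), $\BAR{g}\in\unipBAR{\alpha}{\openF}$ means that after rescaling we may take $a=d$, i.e.\ the affine map is a translation. In case (i) a nonidentity translation has no fixed point, and $\listgen{\BAR g}$ sits inside $\unipBAR{\alpha}{\openF}\isom\openF\additive$. In case (ii) the map $x\mapsto \lambda x + c/d$ has $\lambda = a/d \ne 1$, so it has the unique fixed point $x_0 = c/(d-a)$. Conjugating by the translation moving $x_0$ to $0$ makes $\BAR g$ lie in the group of maps $x\mapsto \lambda x$. The map $\BAR{g}\mapsto \lambda$ is then an injective homomorphism from the stabilizer of $\alpha$ and $x_0$ into $\openF\mult$, so $\listgen{\BAR g}$ embeds in $\openF\mult$.

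The only real care needed is the general-$\alpha$ reduction. For $h\in\gl{2}{\openF}$, $g$ acts trivially on $\alpha$ and on $\openF^{2}/\alpha$ exactly when $h^{-1}gh$ acts trivially on $\alpha h$ and on $\openF^{2}/\alpha h$. Hence $\unip{\alpha}{\openF}^{h} = \unip{\alpha h}{\openF}$, and fixed points correspond under $h$, so every statement transfers. Part (d) is stated only for $\alpha=[1:0]$ and needs no transfer. I expect no genuine obstacle beyond keeping the row- versus column-action conventions consistent with the paper's $[x:y]$ notation.
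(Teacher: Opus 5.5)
Your proposal is correct and follows essentially the same route as the paper. Both arguments reduce to $\alpha=[1:0]$ and write $g$ as a lower triangular matrix acting by $a$ on $\alpha$ and by $d$ on $\openF^{2}/\alpha$. Parts (d), (a) and (b) are then read off from the translations $x\mapsto x+c$. For (c)(ii) you solve $(d-a)x=c$ for the unique fixed point and embed $\listgen{\BAR{g}}$ in the two-point stabilizer, which is isomorphic to $\openF\mult$. Your explicit check that $\unip{\alpha}{\openF}^{h}=\unip{\alpha h}{\openF}$ simply makes precise the ``without loss'' reduction that the paper leaves implicit.
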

\begin{proof}
    Since $\gl{2}{\openF}$ is transitive on $\pl{\openF}$
    we may suppose that $\alpha = [1:0]$.
    Then \[
        g = \PMatrixC{ a & 0 \\ c & d}
    \]
    for some $a,d \in \openF \remove{0}$ and $c \in \openF$.
    Now $g$ acts as multiplication by $a$ on $\alpha$ and
    multiplication by $d$ on $\openF^{2} / \alpha$.
    This proves (d) and (a) follows.
    We have \[
        [0:1] \PMatrixC{1 & 0 \\ c & 1} = [c:1]
    \]
    so (b) follows because
    $\pl{\openF} = \set{ [x:1] }{ x \in \openF } \cup \listset{[1:0]}$.

    Note that (c)(i) follows from (b) and (a).
    It remains to prove (c)(ii),
    so suppose that $\BAR{g} \not\in \unipBAR{\alpha}{\openF}$.
    Then $a \not= d$.
    Moreover if $x \in \openF$ then \[
        [x:1]g = [x:1] \qtext{if and only if} (d-a)x = c.
    \]
    Thus $[(d-a)^{-1}c:1]$ is the unique  fixed point of $\BAR{g}$ on
    $\pl{\openF} \setminus \listset{\alpha}$.
    Finally, \[
        \pgl{2}{\openF}_{\alpha\beta} \isom \openF\mult
    \]
    whenever $\alpha, \beta \in \pl{\openF}$ are distinct
    and the proof is complete.
\end{proof}

The following two lemmas show that the situation considered
in \thmB\ is quite general.

\begin{Lemma} \label{pl.2}
    Let $p = \Char\openF$ and suppose $p > 0$.
    \begin{itemize}
        \item[(a)]  Let $g \in \gl{2}{\openF}\nonid$ be a $p$-element.
                    Then $g \in \unip{\alpha}{\openF}$ for some $\alpha \in \pl{\openF}$.
                    In particular $\card{\fix{\pl{\openF}}{g}} = 1$.

        \item[(b)]  Let $\BAR{g} \in \psl{2}{\openF}\nonid$.
                    Then \[\mbox{%
                        $\card{\fix{\pl{\openF}}{\BAR{g}}} = 1$
                        if and only if
                        $\BAR{g}$ is a $p$-element.
                    }\]

        \item[(c)]  Suppose there are $p$-elements contained in
                    $\pgl{2}{\openF} \setminus \psl{2}{\openF}$.
                    Then $p = 2$ and $\openF$ is imperfect.
    \end{itemize}
\end{Lemma}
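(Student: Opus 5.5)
For (a), I will work with the characteristic polynomial. Write $g$ as a $2\times 2$ matrix over $\openF$. Since $g$ is a $p$-element, $g^{p^k}=1$ for some $k$. Then $(g-1)^{p^k}=g^{p^k}-1=0$, because $p^k$ is the characteristic power and $g$ commutes with $1$. So $g-1$ is nilpotent, and as it is $2\times 2$, $(g-1)^2=0$. Since $g\neq 1$, the kernel of $g-1$ is a $1$-dimensional subspace $\alpha$, and $\operatorname{im}(g-1)\subseteq\ker(g-1)=\alpha$. Hence $g$ acts trivially on $\alpha$ and on $\openF^{2}/\alpha$, that is, $g\in\unip{\alpha}{\openF}$. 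Lemma~\ref{pl.1}(b) says $g$ has no fixed point on $\pl{\openF}\remove{\alpha}$, so $\card{\fix{\pl{\openF}}{g}}=1$.

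For (b), take $g\in\spl{2}{\openF}$ representing $\BAR{g}$.

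\emph{Forward direction.} Suppose $\BAR{g}$ has exactly one fixed point $\alpha$. Then $\BAR{g}\in\pgl{2}{\openF}_{\alpha}$. By Lemma~\ref{pl.1}(c)(ii), if $\BAR{g}\notin\unipBAR{\alpha}{\openF}$ it would have a second fixed point, a contradiction. So $\BAR{g}\in\unipBAR{\alpha}{\openF}$, and by Lemma~\ref{pl.1}(a) this group is isomorphic to $\openF\additive$, an elementary abelian $p$-group. Hence $\BAR{g}$ has order $p$.

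\emph{Reverse direction.} Suppose $\BAR{g}$ is a $p$-element, so $g^{p^k}$ is scalar, say $\lambda$, for some $k$.
\begin{itemize}
\item If $p$ is odd, $\lambda^2=\det g^{p^k}=1$, so $\lambda=\pm1$. Replacing $g$ by $-g$ if needed (note $(-1)^{p^k}=-1$), we get $g^{p^k}=1$, and (a) applies.
\item If $p=2$, $\lambda^2=1$ gives $\lambda=1$ directly, and (a) applies.
\end{itemize}
The scalar bookkeeping in the odd case is the only delicate point here.

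For (c), let $\BAR{g}\in\pgl{2}{\openF}\setminus\psl{2}{\openF}$ be a $p$-element, with lift $g$ satisfying $g^{p^k}=\lambda$ scalar. The idea is to use determinants modulo squares. Since $\BAR{g}\notin\psl{2}{\openF}$, $\det g$ is not a square in $\openF$; otherwise $g$ could be rescaled into $\spl{2}{\openF}$. We have $\det(g)^{p^k}=\lambda^2$.
\begin{itemize}
\item If $p$ is odd, $p^k$ is odd, so $\det g=(\det g)^{p^k}\cdot\bigl((\det g)^{(1-p^k)/2}\bigr)^2$ is a square, a contradiction. Thus $p=2$.
\item If $p=2$ and $\openF$ is perfect, every element is a square, so $\pgl{2}{\openF}=\psl{2}{\openF}$, a contradiction. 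Thus $\openF$ is imperfect.
\end{itemize}
The main step to check carefully is that $\BAR{g}\in\psl{2}{\openF}$ exactly when $\det g\in(\openF\mult)^2$. This is immediate since scaling by $c$ multiplies the determinant by $c^2$.
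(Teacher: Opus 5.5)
Your proof is correct and takes essentially the same route as the paper. In (a), your nilpotence of $g-1$ is the paper's observation that $1$ is the only eigenvalue of $g$. In (b), you lift to a $p$-element of $\spl{2}{\openF}$ just as the paper does via $\card{\zenter{\spl{2}{\openF}}} \le 2$. In (c), your determinant-modulo-squares computation unpacks the paper's one-line appeal to $\pgl{2}{\openF}/\psl{2}{\openF} \isom \openF\mult/(\openF\mult)^{2}$.
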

\begin{proof}
    (a). Since $\Char\openF = p$,
    the only eigenvalue for $g$ is $1$.
    Thus $g \in \unip{\alpha}{\openF}$ where
    $\alpha$ is the $1$-eigenspace of $g$.
    Lemma~\ref{pl.1}(c) implies that
    $\listset{\alpha} = \fix{\pl{\openF}}{g}$.

    (b). Suppose that $\fix{\pl{\openF}}{\BAR{g}} = \listset{\alpha}$.
    Lemma~\ref{pl.1} implies $\BAR{g} \in \unipBAR{\alpha}{\openF} \isom \openF\additive$
    so $\BAR{g}$ is a $p$-element.
    Conversely suppose that $\BAR{g}$ is a $p$-element.
    Now $\BAR{g} \in \psl{2}{\openF}$ and
    $\card{\zenter{\spl{2}{\openF}}} = 1$ or $2$ so
    $\BAR{g}$ has an inverse image $g \in \psl{2}{\openF}$
    that is a $p$-element.
    Then (a) implies that
    $\card{\fix{\pl{\openF}}{\BAR{g}}} = 1$.

    (c). Because
    $\pgl{2}{\openF} / \psl{2}{\openF} \isom \openF\mult / ( \openF\mult )^{2}$.
\end{proof}

\begin{Lemma} \label{pl.3}
    Let $\alpha, \beta \in \pl{\openF}$ be distinct,
    $a \in \unip{\alpha}{\openF}\nonid$ and
    $b \in \unip{\beta}{\openF}$.
    Then there exists $g \in \gl{2}{\openF}$ and $l \in \openF$
    such that
    $\alpha g = [1:0], \beta g = [0:1]$, \[
        a^{g} = \PMatrixC{1 & 0 \\ 1 & 1} \qtext{and}
        b^{g} = \PMatrixC{1 & l \\ 0 & 1}.
    \]
\end{Lemma}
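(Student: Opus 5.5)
The plan is to normalize in three stages, each using a different part of the stabilizer of a point or pair of points: first move the two points, then use a diagonal matrix to fix the entry of $a$, and then read off the form of $b$.

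\emph{Moving the points.} Since $\gl{2}{\openF}$ is $2$-transitive on $\pl{\openF}$, there is $g_{0} \in \gl{2}{\openF}$ with $\alpha g_{0} = [1:0]$ and $\beta g_{0} = [0:1]$. Conjugation by $g_{0}$ maps $\unip{\alpha}{\openF}$ onto $\unip{\alpha g_{0}}{\openF}$, because the definition of $\unip{\alpha}{\openF}$ is intrinsic to the subspace $\alpha$. Lemma~\ref{pl.1}(d) then gives
\[
    a^{g_{0}} = \PMatrixC{1 & 0 \\ c & 1}
\]
for some $c \in \openF$, with $c \neq 0$ because $a \neq 1$.

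\emph{Describing $\unip{[0:1]}{\openF}$.} The analogue of Lemma~\ref{pl.1}(d) for $[0:1]$ is needed: $\unip{[0:1]}{\openF}$ consists of the matrices $\PMatrixC{1 & l \\ 0 & 1}$ with $l \in \openF$. One way is to conjugate by the permutation matrix that swaps $[1:0]$ and $[0:1]$. Alternatively, check directly that such a matrix fixes $(0,1)$ and acts trivially on the quotient $\openF^{2}/[0:1]$, and conversely that any matrix stabilizing $[0:1]$ and acting trivially on both $[0:1]$ and the quotient has this form. Hence $b^{g_{0}} = \PMatrixC{1 & l' \\ 0 & 1}$ for some $l' \in \openF$.

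\emph{Normalizing the entry of $a$.} Conjugate further by a diagonal matrix $h = \PMatrixC{x & 0 \\ 0 & y}$, which fixes both $[1:0]$ and $[0:1]$. Computing $h^{-1} \PMatrixC{1 & 0 \\ c & 1} h$ gives lower-left entry $c x y^{-1}$ (up to the row-vector convention, this is $c$ times $x/y$ or its inverse). Taking $x = 1$ and $y = c$, or the reverse depending on convention, makes this entry $1$.

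\emph{Conclusion.} The same diagonal conjugation sends $b^{g_{0}}$ to $\PMatrixC{1 & l \\ 0 & 1}$ with $l = l' y x^{-1}$ (or its inverse-scaled version), which is still of the required shape. Set $g = g_{0} h$; it still sends $\alpha \mapsto [1:0]$ and $\beta \mapsto [0:1]$, since $h$ fixes both points.

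No step is a genuine obstacle. The only care needed is keeping the right-action convention $[x:y]g$ consistent when computing the conjugates, and justifying that conjugation respects the unipotent subgroups: $\unip{\alpha}{\openF}^{g} = \unip{\alpha g}{\openF}$.
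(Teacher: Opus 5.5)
Your proof is correct, and it differs from the paper's only in how the entry of $a$ is normalized to $1$.

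The paper first notes that $\alpha$, $\beta$, $\beta a$ are distinct. This follows from the regularity of $\unip{\alpha}{\openF}$ on $\pl{\openF} \setminus \listset{\alpha}$ together with $a \neq 1$. It then uses $3$-transitivity to send these three points to $[1:0]$, $[0:1]$, $[1:1]$ in a single step. The one-line check $[1:1] = \beta a g = [0:1]a^{g} = [m:1]$ then forces the lower-left entry $m$ of $a^{g}$ to be $1$.

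You instead use only $2$-transitivity and then conjugate by an explicit diagonal matrix to rescale $c$ to $1$. This is exactly the action of the two-point stabilizer that the paper's third-point condition encodes implicitly.

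Your version is more hands-on. It also justifies two facts the paper uses without comment: that $\unip{\alpha}{\openF}^{g} = \unip{\alpha g}{\openF}$, and the matrix form of $\unip{[0:1]}{\openF}$. The paper's version is shorter and needs no computation beyond that one-line check.

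On the convention you hedged about: with $a^{h} = h^{-1}ah$ and the row-vector action $[x:y]g$, your choice $x = 1$, $y = c$ is the right one.
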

\begin{proof}
    Since $\unip{\alpha}{\openF}$ is regular on $\pl{\openF} \remove{\alpha}$
    it follows that $\alpha, \beta$ and $\beta a$ are distinct.
    By $3$-transitivity,
    $\alpha g = [1:0], \beta g = [0:1]$ and $\beta ag = [1:1]$
    for some $g \in \gl{2}{\openF}$.
    Now $a^{g} \in \unip{[1:0]}{\openF}$ and $b^{g} \in \unip{[0:1]}{\openF}$ so
    $a^{g} = \PSmallMatrixC{1 & 0 \\ m & 1}$ and
    $b^{g} = \PSmallMatrixC{1 & l \\ 0 & 1}$
    for some $m,l \in \openF$.
    We have \[
        [1:1] = \beta ag = \beta ga^{g} = [0:1]\PMatrixC{1 & 0 \\m & 1} = [m:1]
    \]
    so $m = 1$.
\end{proof}

%% file: sf.tex
\section{Subfield subgroups} \label{sf}

Suppose that $\openE$ is a subfield of $\openF$.
Clearly $\gl{2}{\openE} \leq \gl{2}{\openF}$.
Now $\zenter{\gl{2}{\openE}} = \gl{2}{\openE} \cap \zenter{\gl{2}{\openF}}$
so we may identify $\pgl{2}{\openE}$ with
the image of $\gl{2}{\openE}$ in $\pgl{2}{\openF}$
so that \[
    \pgl{2}{\openE} = \BAR{\gl{2}{\openE}} \leq \pgl{2}{\openF}.
\]
Similarly for $\psl{2}{\openE}$.

The map $[x:y]_{\openE} \mapsto [x:y]_{\openF}$
defines an injection $\pl{\openE} \longrightarrow \pl{\openF}$
that is compatible with the action of $\gl{2}{\openE}$.
Hence we identify $\pl{\openE}$ with its image in $\pl{\openF}$
so that
\begin{align*}
    \pl{\openE} &= \bset{ [x:1] }{ x \in \openE } \cup \blistset{ [1:0] } \\
                &\subseteq \bset{ [x:1] }{ x \in \openF } \cup \blistset{ [1:0] } %
                = \pl{\openF}.
\end{align*}
The notation $\pgl{2}{\openF}_{(\pl{\openE})}$ denotes
the setwise stabilizer of $\pl{\openE}$ in $\pgl{2}{\openF}$.

\begin{Lemma} \label{sf.1}
    Let $\openE$ be a subfield of $\openF$.
    \begin{itemize}
        \item[(a)]  If $\BAR{g} \in \pgl{2}{\openF}$ and
                    $\card{ \pl{\openE} \cap \pl{\openE}\BAR{g} } \geq 3$
                    then $\pl{\openE} = \pl{\openE}\BAR{g}$ and
                    $\BAR{g} \in \pgl{2}{\openE}$.

        \item[(b)]  $\pgl{2}{\openF}_{(\pl{\openE)}} = \pgl{2}{\openE}$.

        \item[(c)]  $\gl{2}{\openF}_{(\pl{\openE)}} = \zenter{\gl{2}{\openF}}\gl{2}{\openE}$.
    \end{itemize}
\end{Lemma}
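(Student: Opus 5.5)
The plan is to prove (a) first and deduce (b) and (c) from it.

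For (a), I would use the sharp $3$-transitivity of $\pgl{2}{\openF}$ on $\pl{\openF}$ together with the fact that $\pgl{2}{\openE}$ is itself $3$-transitive on $\pl{\openE}$.

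Suppose $\card{\pl{\openE} \cap \pl{\openE}\BAR{g}} \geq 3$. Choose three distinct points $\beta_{1}\BAR{g}, \beta_{2}\BAR{g}, \beta_{3}\BAR{g}$ in this intersection, with each $\beta_{i} \in \pl{\openE}$. By $3$-transitivity of $\pgl{2}{\openE}$ on $\pl{\openE}$ there is $\BAR{h}_{1} \in \pgl{2}{\openE}$ with \[
    [1:0]\BAR{h}_{1} = \beta_{1}, \quad [0:1]\BAR{h}_{1} = \beta_{2}, \quad [1:1]\BAR{h}_{1} = \beta_{3}.
\]
Similarly there is $\BAR{h}_{2} \in \pgl{2}{\openE}$ sending $[1:0],[0:1],[1:1]$ to $\beta_{1}\BAR{g}, \beta_{2}\BAR{g}, \beta_{3}\BAR{g}$ respectively.

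Then $\BAR{h}_{1}\BAR{g}$ and $\BAR{h}_{2}$ are elements of $\pgl{2}{\openF}$ that agree on three distinct points. Sharp $3$-transitivity on $\pl{\openF}$ forces $\BAR{h}_{1}\BAR{g} = \BAR{h}_{2}$, so $\BAR{g} = \BAR{h}_{1}^{-1}\BAR{h}_{2} \in \pgl{2}{\openE}$. Since $\pgl{2}{\openE}$ preserves $\pl{\openE}$ under the identification of $\pl{\openE}$ with its image in $\pl{\openF}$, which is compatible with the action of $\gl{2}{\openE}$, we get $\pl{\openE}\BAR{g} = \pl{\openE}$.

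The only point needing care is that the $3$-transitivity of $\pgl{2}{\openE}$ on $\pl{\openE}$ holds for every field $\openE$, including $\openE = \mathbb{F}_{2}$ where $\card{\pl{\openE}} = 3$. It does, since the action is sharply $3$-transitive for any field. Nothing here requires $\openE$ to be finite.

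For (b), the inclusion $\pgl{2}{\openE} \leq \pgl{2}{\openF}_{(\pl{\openE})}$ is the compatibility just noted. Conversely, if $\BAR{g}$ stabilizes $\pl{\openE}$ setwise, then $\pl{\openE} \cap \pl{\openE}\BAR{g} = \pl{\openE}$ has at least $3$ elements, and (a) gives $\BAR{g} \in \pgl{2}{\openE}$.

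For (c), I would pull (b) back along $\gl{2}{\openF} \to \pgl{2}{\openF}$, whose kernel $\zenter{\gl{2}{\openF}}$ acts trivially on $\pl{\openF}$. So $g \in \gl{2}{\openF}$ stabilizes $\pl{\openE}$ if and only if $\BAR{g}$ does, which by (b) holds if and only if $\BAR{g} \in \pgl{2}{\openE} = \BAR{\gl{2}{\openE}}$. That is exactly the condition $g \in \zenter{\gl{2}{\openF}}\gl{2}{\openE}$.

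There is no real obstacle. The only bookkeeping is the identification of $\pgl{2}{\openE}$ with $\BAR{\gl{2}{\openE}}$, using $\zenter{\gl{2}{\openE}} = \gl{2}{\openE} \cap \zenter{\gl{2}{\openF}}$ as set up at the start of this section.
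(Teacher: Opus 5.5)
Your proposal is correct and is essentially the paper's argument: $3$-transitivity of $\pgl{2}{\openE}$ on $\pl{\openE}$ gives an element of $\pgl{2}{\openE}$ that agrees with $\BAR{g}$ on three points, and sharp $3$-transitivity of $\pgl{2}{\openF}$ forces the two to be equal. Your detour through the base triple $[1:0],[0:1],[1:1]$ is only cosmetic, and your deductions of (b) and (c) from (a) spell out what the paper calls immediate.
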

\begin{proof}
    (a). Choose distinct $\alpha, \beta, \gamma \in \pl{\openE}$ with
    $\alpha\BAR{g}, \beta\BAR{g}, \gamma\BAR{g} \in \pl{\openE}$.
    Since $\pgl{2}{\openE}$ is $3$-transitive on $\pl{\openE}$
    we have $\alpha\BAR{g} = \alpha\BAR{h}, \beta\BAR{g} = \beta\BAR{h}$
    and $\gamma\BAR{g} = \gamma\BAR{h}$ for some $\BAR{h} \in \pgl{2}{\openE}$.
    The sharp $3$-transitivity of $\pgl{2}{\openF}$ forces $\BAR{g} = \BAR{h}$.
    In particular, $\BAR{g} \in \pgl{2}{\openE}$.

    (b) and (c) are immediate consequences of (a).
\end{proof}

The following result is an easy special case of
the Glauberman-Heimbeck characterization of finite fields
\cite[Lemma~4.3]{G} and \cite{He}.
The identity used in the proof is due to Hua.

\begin{Lemma} \label{sf.2}
    Let $\openE \subseteq \openF$.
    Assume the following.
    \begin{itemize}
        \item   $\openE$ is a subgroup of $\openF\additive$.

        \item   $a^{-1} \in \openE$ for all $a \in \openE \remove{0}$.

        \item   $1 \in \openE$.

        \item   If $\Char\openF = 2$ then $\openE$ is finite.
    \end{itemize}
    Then $\openE$ is a field.
\end{Lemma}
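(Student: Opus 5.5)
The only axiom of a field that remains to be checked is closure of $\openE$ under multiplication. Commutativity, associativity and distributivity are inherited from $\openF$. Additive closure and additive inverses, the element $1$, and multiplicative inverses of nonzero elements are all given by hypothesis. So the plan is to show that $ab \in \openE$ for all $a,b \in \openE$, using Hua's identity to get a weaker closure property and then upgrading it.

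\emph{Step 1 (Hua's identity in commutative form).} Take $a,b \in \openE$ with $ab \neq 0$ and $ab \neq 1$. Then $b^{-1} - a \in \openE\remove{0}$, so $(b^{-1}-a)^{-1} \in \openE$. A direct computation gives
\[
    a^{-1} + (b^{-1}-a)^{-1} = \frac{b^{-1}}{a(b^{-1}-a)},
\]
which is a nonzero element of $\openE$. Its inverse $a(b^{-1}-a)b = a - a^{2}b$ therefore lies in $\openE$. Subtracting from $a$ gives $a^{2}b \in \openE$. The excluded cases are immediate: if $ab = 0$ then $a^{2}b = 0$, and if $ab = 1$ then $a^{2}b = a$. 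Hence
\[
    a^{2}b \in \openE \quad\text{for all } a,b \in \openE.
\]
Taking $b = 1$ shows in particular that $\openE$ is closed under squaring.

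\emph{Step 2 (from $a^{2}b$ to $ab$).} I would split into two cases according to the characteristic.
\begin{itemize}
    \item[(i)] Suppose $\Char\openF \neq 2$. By squaring-closure and additivity,
    \[
        2ab = (a+b)^{2} - a^{2} - b^{2} \in \openE.
    \]
    Put $y = 2ab$ and assume $y \neq 0$ (otherwise $ab = 0 \in \openE$). Then $y^{-1} \in \openE$, hence $2y^{-1} = y^{-1} + y^{-1} \in \openE$, and inverting gives $y/2 = ab \in \openE$.
    \item[(ii)] Suppose $\Char\openF = 2$. Here the polarization trick fails because $(a+b)^{2} = a^{2} + b^{2}$, so I use finiteness instead. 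The map $x \mapsto x^{2}$ is injective on $\openF$ and sends $\openE$ into $\openE$. Since $\openE$ is finite, it is a bijection of $\openE$, so every $a \in \openE$ has the form $a = c^{2}$ with $c \in \openE$. Then Step 1 applied to the pair $c,b$ gives $ab = c^{2}b \in \openE$.
\end{itemize}

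The main obstacle is Step 1: finding the right rational identity, and then checking carefully that every element inverted along the way is nonzero, with the degenerate cases handled separately. Once $a^{2}b \in \openE$ is available, Step 2 is routine in both characteristics.
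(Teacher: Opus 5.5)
Your proof is correct and follows essentially the same route as the paper: a Hua-type identity gives $a^{2}b \in \openE$, then polarization handles $\Char\openF \neq 2$, and surjectivity of squaring on the finite set $\openE$ handles $\Char\openF = 2$. The differences are cosmetic: you invert $b^{-1}-a$ where the paper inverts $a+b^{-1}$, and you divide by $2$ by inverting $2y^{-1}$, whereas the paper applies $a^{2}b \in \openE$ with $b = 1/2$.
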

\begin{proof}
    We must show that $\openE$ is closed under multiplication.
    Let $a, b \in \openE$.
    We claim that
    \begin{equation} \tag{$*$}
        a^{2}b \in \openE.
    \end{equation}
    If $a = 0, b = 0$ or $a = -b^{-1}$ then this is visibly true.
    If not then \[
        \frac{1}{a} - \frac{1}{a+b^{-1}} = %
        \frac{b^{-1}}{a(a+b^{-1})} = \frac{1}{a(ab+1)}
    \]
    so \[
        a^{2}b = \left( \frac{1}{a} - \frac{1}{a + b^{-1}} \right)^{-1} - a \in \openE.
    \]

    Suppose that $\Char\openF \not= 2$.
    Then $1/2 \in \openE$ and then \[
        ab = (a + b)^{2}\half - a^{2}\half - b^{2}\half \in \openE.
    \]

    Suppose that $\Char\openF = 2$.
    The map $\sigma: \openF \longrightarrow \openF$ defined by
    $x\sigma = x^{2}$ is injective.
    By (1), with $b = 1$, we have $\openE\sigma \subseteq \openE$.
    Then $\openE\sigma = \openE$ because $\openE$ is finite.
    It now follows from $(*)$ that $\openE$ is closed under multiplication.
\end{proof}

The following result must be known but I have been unable to find a reference.

\begin{Theorem} \label{sf.3}
    Let $G \leq \gl{2}{\openF}$ and suppose that
    $G$ has an orbit $\Gamma$ on $\pl{\openF}$
    with the following properties.
    \begin{itemize}
        \item[(i)]  $G \cap \unip{\gamma}{\openF}$ is
                    transitive on $\Gamma \remove{\gamma}$
                    for some $\gamma \in \Gamma$.

        \item[(ii)] $\card{\Gamma} \geq 3$.

        \item[(iii)] If $\Char\openF = 2$ then
                     $G \cap \unip{\gamma}{\openF}$ is finite.
    \end{itemize}
    Then there exists a subfield $\openE \subseteq \openF$ and
    $h \in \gl{2}{\openF}$ such that the following hold.
    \begin{itemize}
        \item[(a)]  $\openE\additive \isom G \cap \unip{\gamma}{\openF}$.

        \item[(b)]  $\Gamma h = \pl{\openE}$.

        \item[(c)]  $\spl{2}{\openE} \normal G^{h} \leq \gl{2}{\openF}_{(\pl{\openE})}$.
    \end{itemize}
\end{Theorem}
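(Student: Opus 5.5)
We normalize coordinates and then read off a field from the unipotent subgroup. Set $U = G \cap \unip{\gamma}{\openF}$. By (ii) choose $\beta \in \Gamma \remove{\gamma}$. By Lemma~\ref{pl.1}(b), $U$ acts semiregularly on $\pl{\openF}\remove{\gamma}$, and by (i) it is transitive on $\Gamma \remove{\gamma}$. So $U \ne 1$, and we may pick $a \in U\nonid$. Since $G$ is transitive on $\Gamma$, some element of $G$ maps $\gamma$ to $\beta$, and conjugating $U$ by it shows that $G \cap \unip{\beta}{\openF}$ is transitive on $\Gamma \remove{\beta}$. Apply Lemma~\ref{pl.3} to $a$ and $b=1$ to obtain $h \in \gl{2}{\openF}$ with $\gamma h = [1:0]$, $\beta h = [0:1]$ and $a^{h} = \PSmallMatrixC{1&0\\1&1}$. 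Replace $G$ by $G^{h}$ and $\Gamma$ by $\Gamma h$. By Lemma~\ref{pl.1}(d) we then have
\[
U = \bset{ \PSmallMatrixC{1&0\\c&1} }{ c \in \openE }
\]
for some additive subgroup $\openE \le \openF$ containing $1$. Since $[0:1]\PSmallMatrixC{1&0\\c&1} = [c:1]$ and $U$ acts regularly on $\Gamma\remove{[1:0]}$, this gives $\Gamma = \set{[c:1]}{c\in\openE} \cup \listset{[1:0]}$.

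Next we show that $\openE$ is closed under inversion. Let $w = \PSmallMatrixC{0&1\\-1&0}$, which swaps $[1:0]$ and $[0:1]$ and sends $[c:1]$ to $[-1:c] = [-c^{-1}:1]$ for $c \ne 0$. We need an element of $G$ acting on $\Gamma$ as $w$ does. Let $V = G \cap \unip{[0:1]}{\openF}$, which is transitive on $\Gamma\remove{[0:1]}$, so some $v \in V$ sends $[1:0]$ to $[1:1]$. Then $u = \PSmallMatrixC{1&0\\-1&1} \in U$ sends $[1:1]$ to $[0:1]$, and some $u' \in U$ sends $[0:1]$ to a suitable point. A product $g = u v u'$ (the standard Bruhat-type identity in $\spl{2}$) swaps $[1:0]$ and $[0:1]$. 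Such a $g$ is monomial modulo scalars, $g = \lambda\PSmallMatrixC{0&s\\t&0}$, and conjugation by $g$ swaps $U$ and $V$. Hence $V = \set{\PSmallMatrixC{1&e\\0&1}}{e \in \openE'}$ with $\openE' = (s/t)\openE$ (up to sign) and $\Gamma = \set{[1:e]}{e\in\openE'}\cup\listset{[0:1]}$. Comparing the two descriptions of $\Gamma$ gives $\openE' = \set{c^{-1}}{c\in\openE\remove{0}}\cup\listset{0}$, and since $1 \in \openE'$ we have $s/t = \pm 1$. The key point is that $\openE'$ is a scalar multiple of $\openE$, so $\openE^{-1}\cup\{0\} = \mu\openE$ with $\mu = \pm1$ (from $1\in\openE$ and $1 = 1^{-1}$). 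Thus $\openE$ is closed under inverses. Hypothesis (iii) supplies finiteness in characteristic~2, so Lemma~\ref{sf.2} shows that $\openE$ is a field. This proves (a) and (b), since $\Gamma = \pl{\openE}$ by the description above.

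For (c), $G$ stabilizes $\Gamma = \pl{\openE}$, so $G \le \gl{2}{\openF}_{(\pl{\openE})}$, which by Lemma~\ref{sf.1}(c) equals $\zenter{\gl{2}{\openF}}\gl{2}{\openE}$. Since $\openE' = \openE$ (as $\mu = \pm1$), $G$ contains all the matrices $\PSmallMatrixC{1&0\\c&1}$ and $\PSmallMatrixC{1&e\\0&1}$ with $c, e \in \openE$, and these generate $\spl{2}{\openE}$. This is the standard generation of $\spl{2}$ over a field by its two opposite root groups. Normality follows because $\spl{2}{\openE}$ is normalized by $\gl{2}{\openE}$ and by scalars, hence by $\zenter{\gl{2}{\openF}}\gl{2}{\openE} \ge G$.

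The main obstacle is the second step. We must produce an element of $G$ interchanging $[1:0]$ and $[0:1]$ and verify that it is monomial, so that it conjugates $U$ onto $V$ up to a scalar rescaling of parameters; this is what yields inverse-closure of $\openE$. To get it, I would first try to take $v \in V$ with $[1:0]v = [1:1]$ and then choose $u, u' \in U$ so that $uvu'$ fixes neither $[1:0]$ nor $[0:1]$ but interchanges them, checking this by a direct $2\times2$ computation of the entries using $[1:1]$ in both descriptions of $\Gamma$.
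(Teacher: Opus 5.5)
Your proposal follows the paper's proof closely. You normalize so that $\gamma=[1:0]$ and $\beta=[0:1]$, and read off an additive subgroup $\openE\ni 1$ from $G\cap\unip{[1:0]}{\openF}$ and a second one from $V=G\cap\unip{[0:1]}{\openF}$. You then use an element of $G$ interchanging $[1:0]$ and $[0:1]$ to identify the two subgroups up to sign. Comparing the two descriptions of $\Gamma\remove{[1:0],[0:1]}$ gives closure under inversion, and you finish with Lemma~\ref{sf.2} and the generation of $\spl{2}{\openE}$ by root groups. Getting $1\in\openE$ from Lemma~\ref{pl.3}, rather than from $[1:1]\in\Gamma$, is an inessential variation. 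Your explicit normality argument via Lemma~\ref{sf.1}(c) usefully fills in a point the paper leaves implicit.

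However, the justification you give at the crucial step does not hold: from $1\in\openE'$ you cannot conclude $s/t=\pm1$. For an arbitrary $g=\PSmallMatrixC{0&s\\t&0}\in G$ interchanging $[1:0]$ and $[0:1]$, the condition $1\in\openE'=(s/t)\openE$ only says $t/s\in\openE$. Indeed, for $G=\gl{2}{\openE}$ with $\openE$ a finite subfield, every pair $s,t\in\openE\remove{0}$ occurs. So the relation $\set{c^{-1}}{c\in\openE\remove{0}}\cup\listset{0}=\mu\openE$ with an undetermined $\mu$ does not yield inverse-closure.

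The repair is the explicit computation you postpone, and it is exactly what the paper does. Since $[1:0]\PSmallMatrixC{1&e\\0&1}=[1:e]$, the element $v\in V$ sending $[1:0]$ to $[1:1]$ is forced to be $\PSmallMatrixC{1&1\\0&1}$. Take $u'=u=\PSmallMatrixC{1&0\\-1&1}$; then $uvu=\PSmallMatrixC{0&1\\-1&0}=w$ exactly (the paper uses the equivalent identity $w=vuv$). Conjugating $\PSmallMatrixC{1&0\\c&1}$ by $w$ gives $\PSmallMatrixC{1&-c\\0&1}$, so $\openE'=-\openE=\openE$, and inverse-closure follows. Note also that there is no real obstacle in showing that $g$ is monomial: any element of $\gl{2}{\openF}$ interchanging $[1:0]$ and $[0:1]$ is antidiagonal, as one sees by reading off its rows.
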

\begin{proof}
    Note that (i) holds for all $\gamma \in \Gamma$ since
    $\Gamma$ is an orbit.
    Now $\card{\Gamma} \geq 3$ and
    $\gl{2}{\openF}$ is $3$-transitive on $\pl{\openF}$
    so conjugating $G$ by a suitable member of $\gl{2}{\openF}$
    we may suppose that $[1:0], [0:1], [1:1] \in \Gamma$.

    Let $\alpha = [1:0], \beta = [0:1]$ and
    define $\openE_{\alpha}, \openE_{\beta} \subseteq \openF$ by \[
        G \cap \unip{\alpha}{\openF} = \bbset{ \PMatrixC{1 & 0 \\ a & 1} }{ a \in \openE_{\alpha} } \text{\;and\;}
        G \cap \unip{\beta}{\openF} = \bbset{ \PMatrixC{1 & a \\ 0 & 1} }{ a \in \openE_{\beta} }.
    \]
    Then $\openE_{\alpha}$ and $\openE_{\beta}$ are subgroups of $\openF\additive$.
    We have
    \begin{align}
        \beta ( G \cap \unip{\alpha}{\openF} ) &= %
        \bbset{ [0:1]\PMatrixC{1 & 0 \\ a & 1} }{ a \in \openE_{\alpha} } = %
        \bset{ [a:1] }{ a \in \openE_{\alpha} }. \notag \\
    \intertext{Now $G \cap \unip{\alpha}{\openF}$ is transitive on $\Gamma \remove{\alpha}$ so}
        \Gamma \remove{\alpha, \beta} &=
        \bset{ [a:1] }{ a \in \openE_{\alpha} \remove{0} }. \tag{1} \\
    \intertext{Similarly}
    \alpha ( G \cap \unip{\beta}{\openF} ) &= %
        \bbset{ [1:0]\PMatrixC{1 & a \\ 0 & 1} }{ a \in \openE_{\beta} } =%
        \bset{ [1:a] }{ a \in \openE_{\beta} }. \notag \\
    \intertext{and}
        \Gamma \setminus \listset{\alpha, \beta} &=
        \bset{ [1:a] }{ a \in \openE_{\beta} \remove{0} } = %
        \bset{ [a^{-1}:1] }{ a \in \openE_{\beta} \remove{0} }. \tag{2}
    \intertext{From $(1)$ and $(2)$ we obtain}
        \openE_{\alpha} \remove{0} &=
        \bigl( \openE_{\beta} \remove{0} \bigr)^{-1}. \tag{3}
    \end{align}

     Since $[1:1] \in \Gamma$ it follows from $(1)$ and $(2)$ that
     $1 \in \openE_{\alpha} \cap \openE_{\beta}$.
     Then $-1 \in \openE_{\alpha}$ because $\openE_{\alpha} \leq \openF\additive$.
     Let \[
        w = \PMatrixC{ 0 & 1 \\ -1 & 0 } = %
        \PMatrixC{1 & 1 \\ 0 & 1}\PMatrix{1 & 0 \\ -1 & 1}\PMatrixC{1 & 1 \\ 0 & 1} \in G.
     \]
     Since $w \in G$ and $\alpha w = \beta$ we have \[
        ( G \cap \unip{\alpha}{\openF} )^{w} = G \cap \unip{\beta}{\openF}.
     \]
     For any $a \in \openF$ we also have \[
        \PMatrixC{1 & 0 \\ a & 1}^{w} = \PMatrixC{1 & -a \\ 0 & 1}.
     \]
     Consequently $\openE_{\beta} = -\openE_{\alpha} = \openE_{\alpha}$,
     again because $\openE_{\alpha} \leq \openF\additive$.

     Let $\openE = \openE_{\alpha}$.
     Then $(3)$ and Lemma~\ref{sf.2} imply that $\openE$ is a subfield of $\openF$.
     By the definition of $\openE_{\alpha}$ we have
     $\openE\additive \isom G \cap \unip{\alpha}{\openF}$.
     Recall that $\alpha = [1:0]$ and $\beta = [0:1]$.
     Then $(1)$ implies that \[
        \Gamma = \bset{ [a:1] }{ a \in \openE } \cup \blistset{ [1:0] } = \pl{\openE}.
     \]
     Now $\Gamma$ is an orbit of $G$ hence $G \leq \gl{2}{\openF}_{(\pl{\openE})}$.
     We have \[
        \bbgen{ \PMatrixC{1 & 0 \\ a & 1}, \PMatrixC{1 & a \\ 0 & 1} }{ a \in \openE } \leq G.
     \]
     The left hand side is well known to be $\spl{2}{\openE}$
     whence $\spl{2}{\openE} \leq G$ and the proof is complete.
\end{proof}

%% file: exc.tex
\section{The exceptional case} \label{exc}

Recall that if $\openF$ is a field with characteristic $3$ that
contains a square root $i$ of $-1$ then \[
    \afive{\openF} = \bblistgen{ \PMatrixC{1 & 0 \\ 1 & 1}, \PMatrixC{1 & i \\ 0 & 1} }
\]
and $\afiveBAR{\openF}$ is the image of $\afive{\openF}$ in $\pgl{2}{\openF}$.

The goal of this section is to study and characterize these groups.

\newpage

\begin{Lemma} \label{exc.1}
    Suppose that $\Char\openF = 3$ and that
    $\openF$ contains  a square root $i$ of $-1$.
    \begin{itemize}
        \item[(a)]  $\afiveBAR{\openF} \isom %
                    \afive{\openF} / \zenter{\afive{\openF}} \isom \alt{5}$.

        \item[(b)]  $\zenter{\afive{\openF}} = \listset{\pm 1}$ and
                    $\afive{\openF}$ is perfect.
    \end{itemize}
\end{Lemma}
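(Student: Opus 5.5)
The plan is to reduce to the finite field $\openF_{9}$ and use the isomorphism $\psl{2}{9} \isom \alt{6}$ mentioned in the introduction.

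\textbf{Reduction.} The prime field $\openF_{3}$ together with $i$ generates a copy of $\openF_{9}$ inside $\openF$, and both generating matrices of $\afive{\openF}$ have entries in it. Hence
\[
  \afive{\openF} \leq \spl{2}{9} \leq \spl{2}{\openF}.
\]
So $\afive{\openF}$ does not depend on $\openF$, and we may assume $\openF = \openF_{9}$. Moreover $\zenter{\gl{2}{\openF}} \cap \spl{2}{\openF} = \listset{\pm 1}$, so $\afiveBAR{\openF} \isom \afive{\openF}/(\afive{\openF} \cap \listset{\pm 1})$.

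\textbf{Order of $\afiveBAR{\openF}$.} Set $x = \PSmallMatrixC{1 & 0 \\ 1 & 1}$ and $y = \PSmallMatrixC{1 & i \\ 0 & 1}$; both have order $3$. Compute the product $xy = \PSmallMatrixC{1 & i \\ 1 & 1+i}$, which has trace $2+i$. Its eigenvalues are roots of $t^{2} - (2+i)t + 1$. A short computation (or checking powers) in $\openF_{9}$ should show that $\BAR{xy}$ has order $5$. Then $\afiveBAR{\openF}$ is a subgroup of $\psl{2}{9} \isom \alt{6}$ generated by an element of order $3$ and one whose product with it has order $5$. So its order is divisible by $15$, and it is a proper subgroup: the index is at least $6$, since $\card{\openF_{9}\additive} = 9$ but the group contains only a $3$-part of order $3$ in each root subgroup. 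To pin this down I would argue via the subgroup structure of $\alt{6}$. A subgroup of order divisible by $15$ has order $60$ or $360$, because $\alt{6}$ has no subgroups of order $15$, $30$, $45$, $90$, $120$ or $180$. Equality with $\alt{6}$ is excluded by exhibiting an invariant: $\afiveBAR{\openF}$ stabilizes the $6$-point set $\pl{\openF_{3}}\cup\ldots$. Alternatively, and more robustly, I would verify the presentation $\langle \BAR{x},\BAR{y} \mid \BAR{x}^{3}=\BAR{y}^{3}=(\BAR{x}\BAR{y})^{5}=1\rangle$. This is the von Dyck group $(3,3,5)$, and I would check that it is a quotient of $\alt{5}$; concretely, $\BAR{x}$, $\BAR{y}$ satisfy the relations of $\alt{5}$ generated by two $3$-cycles whose product is a $5$-cycle. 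Such a group is a nontrivial quotient of the simple group $\alt{5}$, hence isomorphic to $\alt{5}$. This gives $\card{\afiveBAR{\openF}} = 60$ and the isomorphism $\afiveBAR{\openF} \isom \alt{5}$.

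\textbf{Centre and perfectness.} Next show $-1 \in \afive{\openF}$: compute $(xy)^{5}$ in $\spl{2}{9}$. Since $\BAR{xy}$ has order $5$ and $xy \neq \pm$ an element of order $5$ with trace $2+i \neq 2$, we get $(xy)^{5} = -1$. Indeed, an order-$10$ element of $\spl{2}{9}$ has eigenvalues $\lambda,\lambda^{-1}$ with $\lambda^{5} = -1$, and this would follow from checking the trace. Thus $\afive{\openF}\cap\listset{\pm1} = \listset{\pm1}$, which gives (a) for the quotient by $\listset{\pm 1}$. The centre $Z$ of $\afive{\openF}$ maps to a normal abelian subgroup of $\alt{5}$, hence to $1$, so $Z = \listset{\pm 1}$. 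For perfectness, the derived subgroup $D$ maps onto $\alt{5}$ since $\alt{5}$ is perfect, so $\afive{\openF} = D\listset{\pm1}$. If $-1 \notin D$ then $D \isom \alt{5}$ would be a complement, and $\afive{\openF} = D \times \listset{\pm1}$. But $\alt{5}$ contains involutions, while the only involution in $\spl{2}{\openF}$ in odd characteristic is $-1$ (an involution has eigenvalues $\pm1$ with determinant $1$, forcing $-1$). This is a contradiction, so $D = \afive{\openF}$.

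\textbf{Main obstacle.} The main obstacle is the identification of $\afiveBAR{\openF}$ with $\alt{5}$ rather than merely a group containing elements of orders $3$ and $5$. The cleanest route is through the von Dyck relations. If those resist verification, I would fall back on counting inside $\psl{2}{9}\isom\alt{6}$ (order $360$): show $\afiveBAR{\openF} \neq \psl{2}{9}$ by noting that its Sylow $3$-subgroups have order $3$, because $\openE_{\alpha}$-type root groups meet it only in $\listset{0,\pm1}$. Then use the fact that the subgroups of $\alt{6}$ of order divisible by $15$ are $\alt{5}$ or $\alt{6}$.
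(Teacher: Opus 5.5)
The parts of your proposal around the key step are correct:
\begin{itemize}
\item the reduction to $\openF_{9}$;
\item the trace computation: trace $2+i$ forces $xy$ to have order $10$ in $\spl{2}{9}$, so $\overline{xy}$ has order $5$ and $(xy)^{5}=-1$;
\item the centre and perfectness argument, using that $-1$ is the only involution of $\spl{2}{\openF}$. This is what the paper compresses into ``since $\alt{5}$ is simple''.
\end{itemize}
The gap is the step you flag as the main obstacle. None of your three routes shows that $\afiveBAR{\openF}$ is $\alt{5}$ rather than all of $\psl{2}{9}$.

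First, $\langle a,b\mid a^{3}=b^{3}=(ab)^{5}=1\rangle$ is the $(3,3,5)$ triangle group. It is infinite because $\frac13+\frac13+\frac15<1$, and $\alt{5}$ is a quotient of it, not conversely. So checking these relations for $\overline{x},\overline{y}$ bounds nothing. Indeed $\alt{6}$ itself is generated by $(123)(456)$ and $(135)$, whose product is a $5$-cycle. Your phrase ``the relations of $\alt{5}$ generated by two $3$-cycles whose product is a $5$-cycle'' hides precisely the relations that are missing.

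Second, the invariant $6$-point set ``$\pl{\openF_{3}}\cup\ldots$'' is never specified and cannot exist. The element $\overline{y}$ moves $[1:0]\in\pl{\openF_{3}}$ to $[1:i]$. In fact $\afiveBAR{\openF}$ is transitive on the ten points of $\pl{\openF_{9}}$ (compare Lemma~\ref{exc.4}).

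Third, you assert that each root subgroup meets the group in a subgroup of order $3$, so that Sylow $3$-subgroups have order $3$. Your ``index at least $6$'' rests on this too. But it is the statement to be proved, not an argument for it.

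The paper closes this step by quoting an explicit isomorphism $\theta\colon\psl{2}{9}\to\alt{6}$ from \cite[p.52]{W}. Under $\theta$, $\overline{x}^{-1}\mapsto(432)$ and $\overline{y}\mapsto(456)$, so $\afiveBAR{\openF}\theta=\langle(432),(456)\rangle=\alt{\{2,\dots,6\}}$ immediately.

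If you prefer to stay with matrices, your von Dyck idea does work once you use the $(2,3,5)$ triangle group instead:
\begin{itemize}
\item $u=xyxy^{-1}$ has trace $0$, so $u^{2}=-1$ and $\overline{u}$ is an involution;
\item $uy=xyx$ has trace $2+2i=2-i$, so $\overline{u}\,\overline{y}$ has order $5$;
\item a direct computation gives $yuyuy^{-1}=-x$, so $\langle\overline{u},\overline{y}\rangle=\afiveBAR{\openF}$.
\end{itemize}
Thus $\afiveBAR{\openF}$ is a nontrivial quotient of $\langle s,r\mid s^{2}=r^{3}=(sr)^{5}=1\rangle\isom\alt{5}$. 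By simplicity it is isomorphic to $\alt{5}$, and the rest of your argument then goes through.
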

\begin{proof}
    Without loss, $\openF$ is generated by $i$ so that $\card{\openF} = 9$.
    Let \[
        g = \PMatrixC{ 1 & 1 \\0 & 1}, \quad h = \PMatrixC{1 & i \\ 0 & 1}, \qtext{and} %
        t = \PMatrix{0 & -1 \\ 1 & 0}.
    \]
    By \cite[p.52]{W} there is
    an isomorphism $\theta : \psl{2}{\openF} \longrightarrow \alt{6}$ with \[
        \BAR{g}\theta = (123), \quad \BAR{h}\theta = (456) \qtext{and} \BAR{t}\theta = (23)(14).
    \]
    Let $\Omega = \listset{2,3,4,5,6}$ and note that $g^{t} = \PSmallMatrixC{1 & 0 \\ -1 & 1}$.
    Then \[
        \afiveBAR{\openF}\theta = \blistgen{ \BAR{g}^{\BAR{t}}, \BAR{h} }\theta =%
        \blistgen{ (432), (456) } = \alt{\Omega} \isom \alt{5}.
    \]
    The kernel of the map $\spl{2}{\openF} \longrightarrow \psl{2}{\openF}$
    sending $k$ to $\BAR{k}$ is $\listset{\pm 1}$ and
    this is the unique subgroup of order $2$ in $\spl{2}{\openF}$.
    Since $\alt{5}$ is simple,
    the conclusions follow.
\end{proof}

\begin{Lemma} \label{exc.2}
    Suppose that $\alt{5}$ is generated by two elements $a$ and $b$
    that have order $3$.
    Then $[a,b]$ is conjugate to $a$.
\end{Lemma}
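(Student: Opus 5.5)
Since $\alt{5}$ is simple and acts transitively by conjugation on each of its classes of elements of order $3$, the plan is to reduce to a computation with a concrete generating pair. All elements of order $3$ in $\alt{5}$ are $3$-cycles, and they form a single conjugacy class; in $\sym{5}$ the $3$-cycles are already conjugate, and the centralizer of a $3$-cycle in $\sym{5}$ contains an odd permutation, so the class does not split in $\alt{5}$. Hence it suffices to show that $[a,b]$ is a $3$-cycle, that is, has order $3$.

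The key step is to classify the possible pairs. Write $a$ and $b$ as $3$-cycles on $\listset{1,\ldots,5}$. If their supports coincide then $\listgen{a,b}$ is cyclic of order $3$. If their supports meet in exactly one point, they generate a group that fixes no point but preserves nothing smaller. However, if the supports share exactly two points, then $a$ and $b$ move only $4$ points and $\listgen{a,b} \leq \alt{4}$. Supports of size $3$ in a $5$-set must intersect, so the only generating configuration has supports meeting in exactly one point. Up to conjugation in $\sym{5}$ (which preserves the claim, since conjugate pairs give conjugate commutators and all $3$-cycles are conjugate), we may take $a = (123)$ and $b = (345)$ or $b = (354)$. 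Replacing $b$ by $b^{-1}$ corresponds to a relabelling $4 \leftrightarrow 5$ combined with an outer automorphism; instead of arguing this, I would simply check both cases directly.

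The final step is the direct computation of $[a,b] = a^{-1}b^{-1}ab$ in each of the two cases, using the paper's right-action convention. For $a=(123)$ and $b=(345)$ this gives $a^{-1}b^{-1}ab = (132)(354)(123)(345)$. Tracking the points yields a product that moves exactly three points. For example, $1 \mapsto 3 \mapsto 5 \mapsto 5 \mapsto 3$, so $1 \mapsto 3$; and $3 \mapsto 2 \mapsto 2 \mapsto 3 \mapsto 4$, so $3 \mapsto 4$. The result is a $3$-cycle, and the case $b=(354)$ is analogous. The only real obstacle is the bookkeeping in the case reduction, namely confirming that intersection size one is forced and that the two orientations are both covered. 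The commutator computation itself is routine.
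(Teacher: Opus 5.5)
Your argument is correct and is essentially the paper's proof: reduce to $a=(123)$, $b=(345)$ and compute $[a,b]=(132)(354)(123)(345)=(134)$, with your support-intersection argument and your remark that the $3$-cycles form a single $\alt{5}$-class simply spelling out the paper's ``without loss''. The second orientation $b=(354)$ needs no separate check and no outer automorphism, since conjugating by $(45)$ fixes $(123)$ and swaps $(345)$ with $(354)$.
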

\begin{proof}
    Without loss $a = (123)$ and $b = (345)$.
    Then \[
        [a,b] = (321)(543)(123)(345) = (134).
    \]
\end{proof}

\begin{Lemma} \label{exc.3}
    Suppose that $\Char\openF  = 3, l \in \openF \setminus \listset{0}$,
    set \[
        G = \bblistgen{ \PMatrixC{1 & 0 \\ 1 & 1}, \PMatrixC{ 1 & l \\ 0 & 1} }
    \]
    and let $\BAR{G}$ be the image of $G$ in $\pgl{2}{\openF}$.
    Assume that $\BAR{G} \isom \alt{5}$.
    Then $l^{2} = -1$ and $G = \afive{\openF}$.
\end{Lemma}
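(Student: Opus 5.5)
The plan is to extract $l^{2}=-1$ from a single trace computation, using Lemma~\ref{exc.2} to know the order of the commutator of the two generators. Write
\[
    x = \PMatrixC{1 & 0 \\ 1 & 1}, \qquad y = \PMatrixC{1 & l \\ 0 & 1}, \qquad c = [x,y] = x^{-1}y^{-1}xy \in \spl{2}{\openF}.
\]
Since $\Char\openF = 3$ and $l \neq 0$, both $x$ and $y$ have order $3$, and so do their images $\BAR{x}, \BAR{y}$ in $\pgl{2}{\openF}$. By hypothesis $\BAR{G} \isom \alt{5}$ is generated by $\BAR{x}$ and $\BAR{y}$. Lemma~\ref{exc.2} then says that $\BAR{c} = [\BAR{x},\BAR{y}]$ is conjugate in $\BAR{G}$ to $\BAR{x}$, so $\BAR{c}$ has order $3$.

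The first step is to turn this into information about $c$ itself. Because $\BAR{c}^{3} = 1$, we have $c^{3} = \lambda$ for some scalar $\lambda$. Since $\det c = 1$, $\lambda^{2} = 1$, so $\lambda = \pm 1$. Replacing $c$ by $-c$ if necessary, we obtain $u = \pm c$ with $u^{3} = 1$. In characteristic $3$ this gives $(u-1)^{3} = u^{3} - 1 = 0$, so $u$ is unipotent, and hence $\operatorname{tr} u = 2$. Therefore $\operatorname{tr} c = \pm 2$. (Equivalently, one could invoke Lemma~\ref{pl.2}(a),(b), but the direct argument is shorter.)

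The second step is the explicit computation. We have
\[
    x^{-1}y^{-1} = \PMatrixC{1 & -l \\ -1 & 1+l}, \qquad xy = \PMatrixC{1 & l \\ 1 & 1+l},
\]
and a short multiplication gives $\operatorname{tr} c = 2 + l^{2}$. The case $2 + l^{2} = 2$ would force $l = 0$, which is excluded. So $2 + l^{2} = -2$, that is, $l^{2} = -4 = -1$ in characteristic $3$.

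Finally, $\openF$ contains the square root $l$ of $-1$, and the only square roots of $-1$ are $\pm i$. If $l = i$, then $G = \afive{\openF}$ by definition. If $l = -i$, then
\[
    y^{-1} = \PMatrixC{1 & i \\ 0 & 1},
\]
so $G = \listgen{x, y} = \listgen{x, y^{-1}} = \afive{\openF}$ again.

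I expect no real obstacle here. The only points needing care are the passage from "$\BAR{c}$ has order $3$" to "$c$ is $\pm$ a unipotent matrix", which is where the scalar ambiguity $\pm 1$ enters, and getting the trace computation right.
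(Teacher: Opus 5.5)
Your proposal is correct and follows the paper's proof essentially verbatim: Lemma~\ref{exc.2} shows that the image of $[x,y]$ is conjugate to the image of $x$, which forces $\operatorname{tr}[x,y]=\pm 2$, and the explicit computation $\operatorname{tr}[x,y]=l^{2}+2$ then yields $l^{2}=-1$. The differences are cosmetic. The paper obtains $\operatorname{tr}[x,y]=\pm 2$ by writing $[x,y]$ as $\pm 1$ times a conjugate of $x$, whereas you use only that the image of $[x,y]$ has order $3$, together with a unipotence argument. You also spell out the case $l=-i$, which the paper leaves implicit in ``follows from the definition''.
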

\begin{proof}
    Let $a = \PSmallMatrixC{1 & 0 \\ 1 & 1}$ and
    $b = \PSmallMatrixC{ 1 & l \\ 0 & 1}$.
    Now $\Char\openF = 3$ so $\BAR{a}$ and $\BAR{b}$ have order $3$.
    Since $\BAR{G} = \listgen{ \BAR{a}, \BAR{b} } \isom \alt{5}$,
    Lemma~\ref{exc.2} implies that
    $[\BAR{a}, \BAR{b}]$ is conjugate to $\BAR{a}$.
    Then $[a,b] = za^{g}$ for some
    $z \in \zenter{\gl{2}{\openF}}$ and $g \in \gl{2}{\openF}$.
    Now $a,b \in \spl{2}{\openF}$ so
    $z \in \zenter{\spl{2}{\openF}} = \listset{\pm 1}$.
    Hence \[
        \operatorname{trace}\,[a,b] = \pm \operatorname{trace}\,a = \pm 2.
    \]
    Now
    \begin{align*}
        [a,b] &= \PMatrixC{1 & 0 \\ -1 & 1}\PMatrixC{1 & -l \\ 0 & 1}%
        \PMatrixC{1 & 0 \\ 1 & 1}\PMatrixC{1 & l \\ 0 & 1}\\
        &= \PMatrixC{1 & -l \\ -1 & l+1}\PMatrixC{1 & l \\ 1 & l+1} = %
        \PMatrixC{ 1-l & * \\ * & l^{2} + l + 1 } \\
    \end{align*}
    so $l^{2} + 2 = \pm 2$.
    Since $l \not= 0$ and $\Char\openF = 3$ this forces $l^{2} = -1$
    and the conclusion follows from the definition of $\afive{\openF}$.
\end{proof}

\begin{Lemma} \label{exc.4}
    Suppose that $\Char\openF = 3$,
    that $\BAR{G} \leq \pgl{2}{\openF}$ and set  \[
        \Gamma = \bset{ \gamma \in \pl{\openF} }{\mbox{%
            $\listset{\gamma} = \fix{\Omega}{\BAR{g}}$ for some $\BAR{g} \in \BAR{G}\nonid$
        }}.
    \]
    Suppose also that $\BAR{G} \isom \alt{5}$.
    Then $\openF$ contains a square root $i$ of $-1$.
    Let $\openE$ be the subfield generated by $i$.
    Then $\card{\openE} = 9$ and there exists $h \in \gl{2}{\openF}$
    such that \[
        \BAR{G}^{\BAR{h}} = \afiveBAR{\openF} \qtext{and} \Gamma \BAR{h} = \pl{\openE}.
    \]
\end{Lemma}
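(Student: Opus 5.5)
Choose two order-$3$ elements of $\BAR{G}$ whose fixed points are distinct and which generate $\BAR{G}$, normalize them with Lemma~\ref{pl.3}, and let Lemma~\ref{exc.3} identify the resulting group. The fixed-point description of $\Gamma$ then follows from \thmD.

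First, lift to $\spl{2}{\openF}$. Let $G$ be the inverse image of $\BAR{G}$ in $\gl{2}{\openF}$. Since $\alt{5}$ is perfect, $\BAR{G} = [\BAR{G},\BAR{G}]$ is the image of $G' \leq \spl{2}{\openF}$, so $\BAR{G} \leq \psl{2}{\openF}$. Take $\BAR{a} \in \BAR{G}$ of order $3$. By Lemma~\ref{pl.2}(b), $\BAR{a}$ has a unique fixed point $\alpha$, so $\alpha \in \Gamma$. Lifting $\BAR{a}$ to a $3$-element $a \in \spl{2}{\openF}$ (possible since $\card{\zenter{\spl{2}{\openF}}} \le 2$), Lemma~\ref{pl.2}(a) gives $a \in \unip{\alpha}{\openF}$.

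Next, choose the second generator. Fix $\BAR{b} \in \BAR{G}$ of order $3$ with $\listgen{\BAR{a},\BAR{b}} = \BAR{G}$; for example, $(123)$ and $(345)$ in $\alt{5}$. Lift $\BAR{b}$ likewise to $b \in \unip{\beta}{\openF}$, where $\beta$ is the unique fixed point of $\BAR{b}$. We need $\alpha \ne \beta$. If $\alpha = \beta$, then $\BAR{a},\BAR{b} \in \unipBAR{\alpha}{\openF}$, which is abelian by Lemma~\ref{pl.1}(a), contradicting $\listgen{\BAR{a},\BAR{b}} \isom \alt{5}$.

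Lemma~\ref{pl.3} now gives $h \in \gl{2}{\openF}$ and $l \in \openF$ with
\[
a^{h} = \PMatrixC{1 & 0 \\ 1 & 1}, \qquad b^{h} = \PMatrixC{1 & l \\ 0 & 1}.
\]
Here $l \ne 0$ because $b \ne 1$. The group $\BAR{G}^{\BAR{h}}$ is generated by the images of these two matrices and is isomorphic to $\alt{5}$. Lemma~\ref{exc.3} then yields $l^{2} = -1$ and $\BAR{G}^{\BAR{h}} = \afiveBAR{\openF}$. So $i = l$ is a square root of $-1$, and $\openE = \openF_{3}(i)$ has order $9$ because $-1$ is a nonsquare in $\openF_{3}$.

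It remains to show $\Gamma\BAR{h} = \pl{\openE}$, and I expect this to be the main obstacle. Replace $\BAR{G}$ by $\BAR{G}^{\BAR{h}} = \afiveBAR{\openF} \leq \psl{2}{\openE}$. By Lemma~\ref{pl.2}(b), $\Gamma$ is exactly the set of fixed points of nonidentity $3$-elements of $\BAR{G}$. Each such element lies in $\pgl{2}{\openE}$, and its fixed point is the $1$-eigenspace of a unipotent matrix over $\openE$, hence lies in $\pl{\openE}$. Thus $\Gamma \subseteq \pl{\openE}$, and $\card{\pl{\openE}} = 10$.

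For the reverse containment, $\alt{5}$ has $10$ Sylow $3$-subgroups. Distinct ones have distinct fixed points, since two order-$3$ subgroups with the same fixed point $\gamma$ both lie in $\unipBAR{\gamma}{\openF}$. Inside the elementary abelian group $\unipBAR{\gamma}{\openF} \cap \BAR{G}$ they would then generate a $3$-subgroup of order $9$, which $\alt{5}$ does not have. Hence $\card{\Gamma} = 10$, so $\Gamma = \pl{\openE}$. Undoing the conjugation gives $\Gamma\BAR{h} = \pl{\openE}$, with $\Gamma$ the original fixed-point set.
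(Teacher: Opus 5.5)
Your proposal is correct and follows the paper's proof essentially step for step: the same generators $(123),(345)$, the same lifts to $3$-elements of $\unip{\alpha}{\openF}$ and $\unip{\beta}{\openF}$ with $\alpha \not= \beta$, normalization by Lemma~\ref{pl.3}, identification by Lemma~\ref{exc.3}, and $\Gamma \subseteq \pl{\openE}$ via unipotent lifts over $\openE$. The only variation is the final count: the paper shows $\card{\BAR{G}_{\gamma}} \in \listset{3,6}$ because $\BAR{G} \cap \unipBAR{\gamma}{\openF} \isom \cyclic{3}$ is normal in $\BAR{G}_{\gamma}$, giving an orbit of length at least $10$, while you map the ten Sylow $3$-subgroups injectively into $\Gamma$ using the same fact that $\BAR{G} \cap \unipBAR{\gamma}{\openF}$ cannot have order $9$ (and your opening mention of Theorem~D is superfluous, since you never use it).
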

\begin{proof}
    Choose $\BAR{a}, \BAR{b} \in \BAR{G}$ that
    map to the $3$-cycles $(123)$ and $(345)$.
    Then $\BAR{G} = \listgen{ \BAR{a}, \BAR{b} }$.
    Now $\alt{5}$ is perfect so $\BAR{G} \leq \psl{2}{\openF}$.
    Since $\zenter{\spl{2}{\openF}} = \listset{ \pm 1 }$
    there exist inverse images $a$ and $b$ of $\BAR{a}$ and $\BAR{b}$
    that have order $3$.
    By Lemma~\ref{pl.2} there exist $\alpha, \beta \in \pl{\openF}$
    with $a \in \unip{\alpha}{\openF}$ and $b \in \unip{\beta}{\openF}$.
    Then $\alpha \not= \beta$ because $[a,b] \not= 1$.
    By Lemma~\ref{pl.3} we have \[
        a^{h} = \PMatrixC{ 1 & 0 \\ 1 & 1} \qtext{and} b^{h} = \PMatrixC{ 1 & i \\ 0 & 1}
    \]
    for some $h \in \gl{2}{\openF}$ and $i \in \openF$.

    Let $G = \listgen{a,b}$.
    Then $\BAR{G}^{\BAR{h}} \isom \alt{5}$.
    The previous lemma implies that $i^{2} = -1$ and
    that $G^{h} = \afive{\openF}$.
    Then $\BAR{G}^{\BAR{h}} = \afiveBAR{\openF}$.

    To prove the final assertion we may suppose that $h = 1$
    so that $G = \afive{\openF}$ and
    $G \leq \spl{2}{\openE} \leq \spl{2}{\openF}$.

    Let $\gamma \in \Gamma$ so that
    $\listset{ \gamma } = \fix{\pl{\openF}}{\BAR{g}}$
    for some $\BAR{g} \in \BAR{G}\nonid$.
    Lemma~\ref{pl.2} implies that $\BAR{g}$ is a $3$-element and
    then that $\fix{\pl{\openE}}{\BAR{g}} \not= \emptyset$.
    Then $\gamma \in \pl{\openE}$ and
    we deduce that $\Gamma \subseteq \pl{\openE}$.

    Since $\BAR{G} \isom \alt{5}$ we may
    choose $\BAR{g} \in \BAR{G}$ with order $3$.
    By Lemma~\ref{pl.2} we have
    $\fix{\pl{\openF}}{\BAR{g}} = \listset{\gamma}$
    for some $\gamma$.
    Set $\BAR{U} = \listgen{\BAR{g}}$.
    Then $\BAR{U} \leq \BAR{G} \cap \unipBAR{\gamma}{\openE}$
    by Lemma~\ref{pl.1}(c).
    Since $\unipBAR{\gamma}{\openE}$ is a $3$-group and
    $\BAR{G} \isom \alt{5}$ we obtain \[
        \cyclic{3} \isom \BAR{U} = \BAR{G} \cap \unipBAR{\gamma}{\openE} \normal \BAR{G}_{\gamma},
    \]
    and then $\card{\BAR{G_{\gamma}}} = 3$ or $6$.
    Hence the orbit of $\BAR{G}$ on $\pl{\openF}$ that contains $\gamma$
    has length $20$ or $10$.
    Now $\card{\pl{\openE}} = \card{\openE} + 1 = 10$.
    Thus $\BAR{G}$ is transitive on $\pl{\openE}$
    and it follows that $\Gamma = \pl{\openE}$.
\end{proof}

%% file: ptab.tex
\section{The proofs of \thmA\ and \thmB} \label{ptab}

It is now just a matter of putting everything together.

\begin{proof}[Proof of \thmA]
    Let $p = \Char\openF$.
    We assume that every orbit of $\BAR{G}$ on $\pl{\extF}$
    has length at least $3$ since otherwise Conclusion~(a) holds.
    We have \[
        \card{\fix{\pl{\extF}}{\BAR{g}}} \in \listset{1,2}
    \]
    for all $\BAR{g} \in \BAR{G}\nonid$.

    Suppose that $\card{\fix{\pl{\extF}}{\BAR{g}}} = 2$
    for all $\BAR{g} \in \BAR{G}\nonid$.
    We claim that $p$ does not divide $\card{\BAR{G}}$.
    If $p = 0$ this is clear.
    Suppose $p > 0$.
    Then $\openF\mult$ does not contain any elements of order $p$.
    Lemma~\ref{pl.1}(c)(ii) implies the same for $\BAR{G}$,
    so the claim holds in this case also.
    \thmC\ now implies that Conclusion~(b) holds.

    Suppose that $\card{\fix{\pl{\extF}}{\BAR{g}}} = 1$
    for some $\BAR{g} \in \BAR{G}\nonid$.
    Lemma~\ref{pl.1}(c) implies that $\listgen{\BAR{g}}$ is isomorphic
    to a subgroup of $\openF\additive$.
    Now $\BAR{G}$ is finite so $p > 0$ and $p$ divides $\BAR{G}$.
    Recall that \[
        \Gamma = \bset{ \gamma \in \pl{\extF} }{\mbox{%
            $\listset{\gamma} = \fix{\Omega}{\BAR{g}}$ for some $\BAR{g} \in \BAR{G}\nonid$
        }}.
    \]
    For each $\gamma \in \Gamma$ let \[
        \BAR{K}_{\gamma} = \bset{ \BAR{g} \in \BAR{G}_{\gamma}\nonid }{%
            \fix{ \pl{\extF} \remove{\gamma} }{\BAR{g}} = \emptyset
        } \cup \blistset{1}.
    \]
    Lemma~\ref{pl.1}(c) implies that \[
        \BAR{K}_{\gamma} = \BAR{G} \cap \unipBAR{\gamma}{\extF}.
    \]
    In particular, $\BAR{K}_{\gamma}$ is a subgroup.

    We apply \thmD.
    Then $\Gamma$ is an orbit.
    Fix $\gamma \in \Gamma$ and let
    $q = \card{\BAR{K}_{\gamma}} = \card{ \BAR{G} \cap \unipBAR{\gamma}{\extF} }$.
    Suppose that Conclusion~(b) of \thmD\ holds.
    Then $\BAR{G} \isom \alt{5}$ and $q = 3$.
    Since $\unipBAR{\gamma}{\extF} \isom \extF\additive$
    this forces $p = 3$.
    Lemma~\ref{exc.4} implies that $\BAR{G}$ is
    conjugate in $\pgl{2}{\openF}$ to $\afiveBAR{\openF}$ and
    so Conclusion~(c)(ii) of \thmA\ holds.
    Hence we assume that Conclusion~(a) of \thmD\ holds.

    Consider the action of $\BAR{K}_{\gamma}$ on $\Gamma \remove{\gamma}$.
    By the definition of $\BAR{K}_{\gamma}$,
    this action is semiregular.
    Now $\card{\BAR{K}_{\gamma}} = q = \card{\Gamma \remove{\gamma}}$
    so it is regular.
    In particular \[ \mbox{%
        $\BAR{G} \cap \unipBAR{\gamma}{\extF}$ is %
        transitive on $\Gamma \remove{\gamma}$ %
        and $q \geq 3$.
    }\]

    Let $G$ be the full inverse image of $\BAR{G}$ in $\gl{2}{\extF}$.
    Recall that $\unip{\gamma}{\extF}$ maps
    isomorphically onto $\unipBAR{\gamma}{\extF}$.
    Then $\card{ G \cap \unip{\gamma}{\extF} } = %
    \card{ \BAR{G} \cap \unipBAR{\gamma}{\extF} } = q$ and
    $G \cap \unip{\gamma}{\extF}$ is transitive on $\Gamma \remove{\gamma}$.

    Theorem~\ref{sf.3} implies there exists a subfield $\openE \subseteq \extF$
    and $h \in \gl{2}{\extF}$ such that
    \begin{itemize}
        \item   $\openE\additive \isom G \cap \unip{\gamma}{\extF}$, so $\card{\openE} = q$;

        \item   $\Gamma h = \pl{\openE}$; and

        \item   $\psl{2}{\openE} \normal \BAR{G}^{\BAR{h}} \leq \pgl{2}{\extF}_{(\pl{\openE})}$.
    \end{itemize}
    Now $\pgl{2}{\extF}_{(\pl{\openE})} = \pgl{2}{\openE}$ by Lemma~\ref{sf.1}.
    Also  \mbox{$\card{ \pgl{2}{\openE} : \psl{2}{\openE} } \leq 2$} because $\openE$ is finite.
    Consequently \[
        \BAR{G}^{\BAR{h}} = \psl{2}{\openE} \qtext{or} \pgl{2}{\openE}.
    \]

    Suppose, for a contradiction, that $\Gamma \nsubseteq \pl{\openF}$.
    Choose $\alpha \in \Gamma \setminus \pl{\openF}$ and $\BAR{g} \in \BAR{G}\nonid$
    with $\listset{\alpha} = \fix{\pl{\extF}}{\BAR{g}}$.
    Then $\fix{\pl{\openF}}{\BAR{g}} = \emptyset$.
    Lemma~\ref{pl.1} implies that $\BAR{g} \in \unipBAR{\alpha}{\extF} \isom \openF\additive$
    so $\BAR{g}$ is a $p$-element.
    By hypothesis, $\BAR{G} \leq \pgl{2}{\openF}$.
    Lemma~\ref{pl.2}(b),(c) implies that
    \begin{equation} \tag{$*$}
        \BAR{g} \not\in \psl{2}{\openF} \qtext{and} p = 2.
    \end{equation}
    Now $\openE$ is finite with characteristic $2$ so
    $\psl{2}{\openE} = \pgl{2}{\openE}$.
    Moreover $\card{\openE} = q \geq 3$ so $\card{\openE} \geq 4$ and
    $\psl{2}{\openE}$ is perfect.
    Then so is $\BAR{G}$.
    Since $\pgl{2}{\openF} / \psl{2}{\openF}$ is abelian we obtain
    $\BAR{G} \leq \psl{2}{\openF}$,
    contrary to $(*)$.
    We deduce that \[
        \Gamma \subseteq \pl{\openF}.
    \]
    We have $\pl{\openE} = \Gamma\BAR{h} \subseteq \pl{\openF}\BAR{h}$.
    Now \[
        [1:0], [0:1], [1:1] \in \pl{\openE} \cap \pl{\openF} \subseteq %
        \pl{\openF}\BAR{h} \cap \pl{\openF}.
    \]
    Lemma~\ref{sf.1}(a) forces $\BAR{h} \in \pgl{2}{\openF}$.
    Then $\pl{\openE} \subseteq \pl{\openF}$.
    Consequently $\openE \subseteq \openF$ and Conclusion~(c)(i) holds.
\end{proof}

\begin{Lemma} \label{ptab.1}
    Suppose that $\Char\openF$ is odd,
    that $G \leq \gl{2}{\openF}$,
    that $\BAR{G}$ is finite and that \[
        \PMatrixC{1 & 0 \\ 1 & 1}, \PMatrixC{1 & l \\ 0 & 1} \in G
    \]
    for some $l \in \openF \remove{0}$.
    Then there exists a subfield $\openE \subseteq \openF$ with $l \in \openE$
    such that one of the following holds.
    \begin{itemize}
        \item[(a)]  $\BAR{G} = \psl{2}{\openE}$ or $\pgl{2}{\openE}$.

        \item[(b)]  $\Char\openF = 3$, $\openF$ contains a square root of $-1$,
                    $\card{\openE} = 9$ and
                    $\BAR{G}$ is conjugate to $\afiveBAR{\openF}$.
    \end{itemize}
\end{Lemma}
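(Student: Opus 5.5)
We apply \thmA\ to $\BAR{G}$, taking $\extF$ to be an algebraic closure of $\openF$. Set $a = \PSmallMatrixC{1 & 0 \\ 1 & 1}$ and $b = \PSmallMatrixC{1 & l \\ 0 & 1}$. Then $a \in \unip{[1:0]}{\extF}$ and $b \in \unip{[0:1]}{\extF}$. By Lemma~\ref{pl.1}(c)(i), $\BAR{a}$ and $\BAR{b}$ are nonidentity elements whose unique fixed points on $\pl{\extF}$ are $[1:0]$ and $[0:1]$ respectively. Hence $p = \Char\openF$ divides $\card{\BAR{G}}$, and $[1:0], [0:1] \in \Gamma$. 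This excludes Conclusion~(b) of \thmA.

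Next we exclude Conclusion~(a). Suppose $\BAR{G}$ has an orbit $\Theta$ of length $1$ or $2$. The element $\BAR{a}$ has odd order $p$, so it fixes $\Theta$ pointwise; as its only fixed point is $[1:0]$, this gives $\Theta = \listset{[1:0]}$. The same argument applied to $\BAR{b}$ gives $\Theta = \listset{[0:1]}$, which is a contradiction. Here the hypothesis that $p$ is odd is essential: an element of odd order cannot swap the two points of an orbit of length $2$.

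Therefore Conclusion~(c) of \thmA\ holds. In case (c)(ii), $p = 3$, $\openF$ contains a square root of $-1$, and $\BAR{G}$ is conjugate to $\afiveBAR{\openF}$ with $\card{\openE} = 9$; here we take $\openE$ to be the subfield generated by $i$. We still need $l \in \openE$. Since $\BAR{G} \isom \alt{5}$, Lemma~\ref{exc.3} gives $l^{2} = -1$, so $l = \pm i \in \openE$ and (b) holds.

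In case (c)(i), we have $\Gamma \BAR{h} = \pl{\openE}$ and $\BAR{G}^{\BAR{h}} = \psl{2}{\openE}$ or $\pgl{2}{\openE}$ for some $\BAR{h} \in \pgl{2}{\openF}$. This is the main obstacle, because we want the conclusion without conjugation. The plan is to show that $\Gamma$ itself equals $\pl{\openE'}$ for a subfield $\openE'$ containing $l$.

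Let $\gamma = [1:0]$. From the proof of \thmA, $\BAR{G} \cap \unipBAR{\gamma}{\extF}$ is transitive on $\Gamma \remove{\gamma}$ and $\card{\Gamma} \geq 3$. We now apply Theorem~\ref{sf.3} directly to the full inverse image of $\BAR{G}$, without first conjugating. Its proof assumed $[1:0], [0:1], [1:1] \in \Gamma$. We have $[0:1] \in \Gamma$, and $[1:1] = [0:1]a \in \Gamma$ because $\Gamma$ is an orbit. So no conjugation is needed, and the proof of Theorem~\ref{sf.3} yields $h = 1$ and $\Gamma = \pl{\openE}$. In that proof $\openE = \openE_{\alpha} = \openE_{\beta}$, and $b \in G \cap \unip{\beta}{\openF}$ gives $l \in \openE_{\beta} = \openE$. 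Finiteness of $\BAR{G}$ makes $\openE$ finite.

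Theorem~\ref{sf.3}(c) together with Lemma~\ref{sf.1}(b) then gives $\psl{2}{\openE} \leq \BAR{G} \leq \pgl{2}{\openE}$. Since $\card{\pgl{2}{\openE} : \psl{2}{\openE}} \leq 2$, we conclude $\BAR{G} = \psl{2}{\openE}$ or $\pgl{2}{\openE}$. Finally $\Gamma \subseteq \pl{\openF}$, as shown in the proof of \thmA, so $\openE \subseteq \openF$ and (a) holds.
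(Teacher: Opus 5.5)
\textbf{The gap is in case (c)(ii).} Your exclusion of Conclusions~(a) and~(b) of \thmA\ is correct, and your treatment of case~(c)(i) works. In case~(c)(ii), however, Lemma~\ref{exc.3} cannot be applied as you do.

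Lemma~\ref{exc.3} concerns the group generated by the two matrices: its hypothesis is that the image of $\listgen{a,b}$ is isomorphic to $\alt{5}$. In Lemma~\ref{ptab.1}, $G$ is only assumed to \emph{contain} $a$ and $b$. So $\BAR{G} \isom \alt{5}$ only says that $\listgen{\BAR{a},\BAR{b}}$ is a subgroup of $\alt{5}$ generated by two elements of order $3$ with distinct fixed points, which could be $\alt{4}$ as well as $\alt{5}$.

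The $\alt{4}$ possibility really occurs. Take $\card{\openF} = 9$ and $l = 1$, so $\listgen{a,b} = \spl{2}{3}$. Under the isomorphism $\theta$ in the proof of Lemma~\ref{exc.1}, $\BAR{a}$ and $\BAR{b}$ go to $(234)$ and $(123)$. These generate the alternating group on $\listset{1,2,3,4}$. That group lies in the alternating group on $\listset{1,2,3,4,5}$, which is conjugate in $\alt{6}$ to $\theta(\afiveBAR{\openF})$. Let $G$ be the full inverse image of the corresponding subgroup of $\pgl{2}{\openF}$. Then all hypotheses hold and you are in case~(c)(ii), but $l^{2} = 1 \neq -1$.

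So the step ``Lemma~\ref{exc.3} gives $l^{2} = -1$'' is false in general, and your argument does not establish $l \in \openE$ in this case. The statement itself is still true there, with $\card{\openE} = 9$. Your case~(c)(i) method cannot be transplanted either. In case~(c)(ii) the group $\BAR{G} \cap \unipBAR{\gamma}{\extF}$ has order $3$, so it is not transitive on the $9$ points of $\Gamma$ other than $\gamma$, and Theorem~\ref{sf.3} does not apply.

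\textbf{The fix.} The missing idea is the rigidity argument in the paper's proof, which handles (c)(i) and (c)(ii) at once.
\begin{enumerate}
\item In either case \thmA\ gives $\Gamma\BAR{h} = \pl{\openE}$ with $\BAR{h} \in \pgl{2}{\openF}$ and $\openE \subseteq \openF$ a field. Hence $[1:0],[0:1],[1:1] \in \pl{\openE}$.
\item You have already shown these three points lie in $\Gamma = \pl{\openE}\BAR{h}^{-1}$.
\item Lemma~\ref{sf.1}(a) then forces $\BAR{h} \in \pgl{2}{\openE}$, so $\Gamma = \pl{\openE}$.
\item Since $[1:l] = [1:0]b \in \Gamma$, you get $l \in \openE$.
\end{enumerate}
In case~(c)(i) it also follows immediately that $\BAR{G} = \psl{2}{\openE}$ or $\pgl{2}{\openE}$, because $\BAR{h} \in \pgl{2}{\openE}$ normalizes both. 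Your re-running of the proof of Theorem~\ref{sf.3} without conjugation is valid, but it does more work than this one-line appeal to Lemma~\ref{sf.1}(a).
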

\begin{proof}
    Let $\extF$ be an algebraic closure of $\openF$.
    The subgroups generated by $\PSmallMatrixC{1 & 0 \\ 1 & 1}$ and
    $\PSmallMatrixC{1 & l \\ 0 & 1}$ have order $\Char\openF \geq 3$
    and act semiregularly on
    $\pl{\extF} \remove{[1:0]}$ and
    $\pl{\extF} \remove{[0:1]}$ respectively.
    Consequently every orbit of $\BAR{G}$ on $\pl{\extF}$ has
    length at least three and
    $\BAR{G}$ has order divisible by $\Char\openF$.
    Conclusion~(c) of \thmA\ holds.
    Set \[
        \Gamma = \bset{ \gamma \in \pl{\extF} }{\mbox{%
            $\listset{\gamma} = \fix{\Omega}{\BAR{g}}$ for some $\BAR{g} \in \BAR{G}\nonid$
        }}.
    \]
    Then $\Gamma$ is an orbit and there exists a subfield $\openE \subseteq \openF$ and
    $\BAR{h} \in \pgl{2}{\openF}$ such that \[
        \Gamma\BAR{h} = \pl{\openE}.
    \]
    Now $[1:0]$ and $[0:1]$ are the unique fixed points of
    $\PSmallMatrixC{1 & 0 \\ 1 & 1}$ and
    $\PSmallMatrixC{1 & l \\ 0 & 1}$ respectively and
    $[1:1] = [0:1]\PSmallMatrixC{1 & 0 \\ 1 & 1}$.
    Then \[
        [1:0], [0:1], [1:1] \in \Gamma \cap \pl{\openE}  = \pl{\openE}\BAR{h}^{-1} \cap \pl{\openE}.
    \]
    Lemma~\ref{sf.1}(a) forces $\BAR{h} \in \pgl{2}{\openE}$ whence \[
        \Gamma = \pl{\openE}.
    \]
    Moreover $[l^{-1}:1] = [1:l] = [1:0]\PSmallMatrixC{1 & l \\ 0 & 1} \in \Gamma$
    whence $l \in \openE$.

    Suppose that Conclusion~(c)(i) of \thmA\ holds.
    Since $\BAR{h} \in \pgl{2}{\openE}$ we have
    $\BAR{G} = \psl{2}{\openE}$ or $\pgl{2}{\openE}$ and (a) holds.
    If Conclusion~(c)(ii) holds then (b) holds.
\end{proof}

\begin{proof}[Proof of \thmB]
    We apply Lemma~\ref{ptab.1}.
    Now $l$ generates $\openF$ so $\openE = \openF$.
    Suppose that Conclusion~(a) of Lemma~\ref{ptab.1} holds.
    Now $G \leq \spl{2}{\openF}$ whence $\BAR{G} = \psl{2}{\openF}$.
    Since $\listset{\pm 1}$ is the unique subgroup of order $2$
    in $\spl{2}{\openF}$ it follows that $G = \spl{2}{\openF}$
    so (a) holds.
    Suppose that Conclusion~(b) of Lemma~\ref{ptab.1} holds.
    Then $\BAR{G} \isom \alt{5}$ and (b) holds by Lemmas~\ref{exc.3} and \ref{exc.1}.
\end{proof}

%% file: pte.tex
\section{The proof of \thmE} \label{pte}
We have the finite group $G$ acting on the set $\Omega$
with the property
\begin{equation} \tag{$*$}
    \mbox{\em $\card{\fix{\Omega}{g}} \in \listset{1,2}$ for all $g \in G\nonid$.}
\end{equation}
In particular \[\mbox{\em
    every $3$-point stabilizer is trivial.
}\]

We assume that each orbit has length at least three
since otherwise Conclusion~(a) holds.
If $\card{\fix{\Omega}{g}} = 2$ for all $g \in G\nonid$
then \thmC\ implies that Conclusion~(b) holds.
Hence we assume that $\card{\fix{\Omega}{g}} = 1$ for some $g \in G\nonid$.
The hypotheses of \thmD\ are satisfied.
If Conclusion~(b) of \thmD\ holds then Conclusion~(d) holds.
Hence we assume that Conclusion~(a) of \thmD\ holds.

A special case of a theorem of Zassenhaus,
Theorem~\ref{zg.4} of the appendix,
will be used to show that Conclusion~(c) holds.
Let \[
    \Gamma = \bset{ \gamma \in \Omega }{\mbox{%
            $\listset{\gamma} = \fix{\Omega}{g}$ for some $g \in G\nonid$
    }}.
\]
\thmD\ implies that \[\mbox{\em
    $G$ acts $2$-transitively on $\Gamma$.
}\]
In particular, $\Gamma$ is an orbit so
$\card{\Gamma} \geq 3$ and $(*)$ implies that
$G$ acts faithfully on $\Gamma$.
Write $\card{\Gamma} = q+1$ with $q \in \openN$.
Let $\gamma, \gamma' \in \Gamma$ be distinct.
By \thmD\, \[
    \card{G_{\gamma\gamma'}} = \half(q-1) \qtext{\em or} q-1.
\]
We claim that \[\mbox{\em
    every element of $G_{\listset{\gamma,\gamma'}} \setminus G_{\gamma\gamma'}$
    is an involution.
}\]
Indeed suppose $g \in G_{\listset{\gamma,\gamma'}} \setminus G_{\gamma\gamma'}$.
By $(*)$ there exists $\alpha \in \fix{\Omega}{g}$.
Now $g$ interchanges $\gamma$ with $\gamma'$ so
$\alpha, \gamma$ and $\gamma'$ are distinct.
Moreover $g^{2}$ fixes $\alpha, \gamma$ and $\gamma'$
so $(*)$ forces $g^{2} = 1$.
The hypotheses of Theorem~\ref{zg.4} are satisfied
and the proof is complete.

%% file: zg.tex
\section{Appendix -- A theorem of Zassenhaus} \label{zg}

{
\newcommand{\bfZero}{\mathbf{0}}
\newcommand{\bfOne}{\mathbf{1}}

For the proof of \thmE\ we need a special case of
a theorem of Zassenhaus that characterizes the groups
$\psl{2}{\openF}$ and $\pgl{2}{\openF}$.
This section consists of a presentation of selected material
from \cite[pp.161--172]{HBIII}.

Given a field $\openF$ and a symbol $\infty \not\in \openF$
we define \[
    \openF^{\infty} = \openF \cup \listset{ \infty }
\]
and extend arithmetic to $\openF^{\infty}$ in the obvious way.
Recall that $\pl{\openF}$,
the projective line over $\openF$ satisfies \[
    \pl{\openF} = \set{ [x:1] }{ x \in \openF } \cup \listset{ [1:0] }.
\]
Thus we identify $\openF^{\infty}$ with $\pl{\openF}$ via \[\mbox{
    $x \mapsto [x:1]$ for $x \in \openF$ and $\infty \mapsto [1:0]$.
}\]
In this way,
a matrix $\PSmallMatrixC{a & b \\ c & d} \in \gl{2}{\openF}$
corresponds to the \emph{linear fractional transformation}
$\openF^{\infty} \longrightarrow \openF^{\infty}$ given by \[
    x \mapsto \frac{ax+c}{bx+d}.
\]
Hence we regard $\pgl{2}{\openF}$ and $\psl{2}{\openF}$ as
permutation groups on $\openF^{\infty}$.
We have
\begin{align*}
    \pgl{2}{\openF} &= \set{ x \mapsto \frac{ax+c}{bx+d} }{ a,b,c,d \in \openF, ad - bc \not= 0}
\intertext{and}
    \psl{2}{\openF} &= \set{ x \mapsto \frac{ax+c}{bx+d} }{\mbox{
            $a,b,c,d \in \openF$ and $ad - bc$ is a nonzero square%
            }}.
\intertext{Then}
    \pgl{2}{\openF}_{\infty} &= \set{ x \mapsto ax+c }{ a \in \openF \remove{0}, c \in \openF}
\intertext{and}
    \psl{2}{\openF}_{\infty} &= \set{ x \mapsto ax+c }{\mbox{
            $a \in \openF \remove{0}, c \in \openF$ and $a$ is a square%
            }}.
\end{align*}
Note also that the matrix $\PSmallMatrixC{0 & 1 \\-1 & 0} \in \spl{2}{\openF}$
gives the transformation $x \mapsto -1/x$ in $\psl{2}{\openF}$.

\begin{Lemma} \label{zg.1}
    Let the group $M$ act on the set $\openF$.
    Suppose that
    \begin{itemize}
        \item[(i)]  $K$ is an abelian minimal normal subgroup of $M$;

        \item[(ii)] $K$ is regular on $\openF$; and

        \item[(iii)] $M_{\alpha}$ is abelian for all $\alpha \in \openF$.
    \end{itemize}
    Let $\bfZero, \bfOne \in \openF$ be distinct.
    Then there exist binary operations $+$ and $\cdot$ on $\openF$
    such that the following hold.
    \begin{itemize}
        \item[(a)]  $(\openF, +, \cdot, \bfZero, \bfOne)$ is a field.

        \item[(b)]  The map $k \mapsto \bfZero k$ is an isomorphism $K \longrightarrow \openF\additive$.

        \item[(c)]  the map $h \mapsto \bfOne h$ is a monomorphism $M_{\bfZero} \longrightarrow \openF\mult$.

        \item[(d)]  For all $x \in \openF, k \in K$ and $h \in M_{\bfZero}$, \[
            xk = x + ( \bfZero k) \qtext{and} xh = x \cdot ( \bfOne h ).
        \]
    \end{itemize}
\end{Lemma}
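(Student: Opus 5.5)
We identify $\openF$ with $K$ through the orbit map and then use the abelian stabilizer $M_{\bfZero}$ to define multiplication. Since $K$ is regular on $\openF$, the map $k \mapsto \bfZero k$ is a bijection $K \longrightarrow \openF$. We define $+$ by transporting the group structure: for $x,y \in \openF$ let $k_{x},k_{y}$ be the unique elements with $\bfZero k_{x} = x$ and $\bfZero k_{y} = y$, and set $x + y = \bfZero k_{x}k_{y}$. Because $K$ is abelian, $(\openF,+)$ is an abelian group with identity $\bfZero$. Moreover $xk = \bfZero k_{x}k = x + \bfZero k$, which gives (b) and the first half of (d).

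Next we treat $M_{\bfZero}$. Since $K \normal M$, conjugation by $h \in M_{\bfZero}$ induces an automorphism of $K$. Transported to $\openF$, it satisfies $(\bfZero k)h = \bfZero h^{-1}kh = \bfZero k^{h}$, so each $h$ acts on $\openF$ as an additive automorphism. Also $M = K M_{\bfZero}$ by regularity of $K$, and $M_{\bfZero}$ acts faithfully on $K$; otherwise a nontrivial element of $M_{\bfZero}$ would centralize $K$ and hence fix every point $\bfZero k$. The key claim is that $M_{\bfZero}$ acts regularly on $\openF \remove{\bfZero}$.

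\emph{Semiregularity.} If $h \in M_{\bfZero}$ fixes $x = \bfZero k \neq \bfZero$, then $h \in M_{x} = M_{\bfZero}^{k}$. Since $M_{\bfZero}$ and $M_{x}$ are abelian, $h$ is centralized by $\langle M_{\bfZero}, M_{x}\rangle$. I would then show that the centralizer $C_{K}(h)$, equivalently the set of fixed points of $h$, is normalized by $M_{\bfZero}$, because $M_{\bfZero}$ is abelian and commutes with $h$. By minimality it is $M$-invariant only if it is also $K$-invariant, which holds since $K$ is abelian. Hence $C_{K}(h)$ is normal in $M = KM_{\bfZero}$. As it contains $k \neq 1$, minimality gives $C_{K}(h) = K$, so $h = 1$.

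\emph{Transitivity.} Let $R$ be the subring of $\mathrm{End}(K)$ generated by the image of $M_{\bfZero}$. It is commutative because $M_{\bfZero}$ is abelian, and $K$ is an $R$-module. By minimality of $K$ as a normal subgroup, $K$ is an irreducible $R$-module: any $R$-submodule is a subgroup normalized by $K$ and by $M_{\bfZero}$, hence by $M$. By Schur's lemma, $D = \mathrm{End}_{R}(K)$ is a division ring, and $R \subseteq D$ because $R$ is commutative. The annihilator of $K$ in $R$ is zero, so $R$ embeds in $\mathrm{End}(K)$, and irreducibility over the commutative ring $R$ forces $R$ to be a field with $K$ one-dimensional over it. Then $K \cong R$ via $r \mapsto \bfOne r$. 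For each $x \neq \bfZero$ there is a unique nonzero $r_{x} \in R$ with $\bfOne r_{x} = x$.

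We define $x \cdot y = \bfOne r_{x}r_{y}$, which makes $\openF \cong R$ a field with $\bfOne$ as identity, proving (a). Each $h \in M_{\bfZero}$ corresponds to an element $r_{h} \in R$, and $xh = \bfOne r_{x}r_{h} = x \cdot (\bfOne h)$, giving the rest of (d). The map $h \mapsto \bfOne h$ is then a homomorphism $M_{\bfZero} \longrightarrow \openF\mult$, injective by semiregularity, which proves (c). Transitivity of $M_{\bfZero}$ on $\openF \remove{\bfZero}$ is not actually needed, since $R$ supplies all the field elements. So the semiregularity step is not strictly required either: faithfulness on $K$ together with $R$ being a field already gives injectivity.

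\textbf{Main obstacle.} The hardest point is proving that $R$ is a field, that is, that an irreducible faithful module over a commutative ring is a field of dimension one. The argument I intend is the following. A faithful irreducible module over a commutative ring gives $R \cong R/\mathrm{Ann}(v)$ for any $v \neq 0$. This is because $\mathrm{Ann}(v)$ is a maximal ideal, and it annihilates $Rv = K$, so by faithfulness $\mathrm{Ann}(v) = 0$. Hence $R$ is a field and $K = Rv$ is one-dimensional over it.
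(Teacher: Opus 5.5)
Your argument is correct and essentially the paper's: you take the commutative subring of $\mathrm{End}(K)$ generated by $M_{\mathbf{0}}$, use minimal normality of $K$ (with $M = KM_{\mathbf{0}}$ and $K$ abelian) to see that $K$ is a faithful irreducible module over it, conclude that the ring is a field over which $K$ is one-dimensional, and transport the structure to $\openF$ via $k \mapsto \mathbf{0}k$; your annihilator argument for why the ring is a field is in fact more complete than the paper's one-line ``it follows'', which needs an extra step when $K$ is infinite. The one thing to fix is the announced ``key claim'' that $M_{\mathbf{0}}$ is regular on $\openF \setminus \{\mathbf{0}\}$. Semiregularity is true, and your proof of it is fine. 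Transitivity, however, is false in general: take $M = K$ cyclic of prime order $p \geq 3$, where $M_{\mathbf{0}} = 1$, or the case $|M_{\mathbf{0}}| = \frac{1}{2}(|\openF|-1)$ in which the paper applies this lemma in Theorem~\ref{zg.4}. So state only semiregularity, or delete the claim, since, as you note yourself, the proof never uses it.
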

\begin{proof}
    Let $H = M_{\bfZero}$.
    Now $\fix{\openF}{\cc{H}{K}}$ is $K$-invariant and contains $\bfZero$
    so by (ii), $\fix{\openF}{\cc{H}{K}} = \openF$.
    Then $\cc{H}{K} = 1$ and $H$ acts faithfully
    on $K$ by conjugation.
    Since $K$ is abelian,
    $\EEnd{K}$ is a ring and we identify $H$ with its image in
    the group of units of $\EEnd{K}$.
    We use the exponential notation $k^{e}$ to denote the
    image of an element $k \in K$ under $e \in \EEnd{K}$.

    Let $\calF$ be the subring of $\EEnd{K}$ generated by $H$.
    Then $\calF$ is commutative since $H$ is abelian.
    By~(i), $1_{K}$ and $K$ are the only
    $\calF$-invariant subgroups of $K$.
    If $f \in \calF$ then the kernel and image of $f$ are $\calF$-invariant.
    It follows that $\calF$ is a field,
    that $K$ is an $\calF$-vectorspace and
    then that $\dim_{\calF}K = 1$.

    Define $\pi : K \longrightarrow \openF$ by $k\pi = \bfZero k$.
    By (ii), $\pi$ is a bijection.
    Choose $k_{1} \in K$ with $\bfOne = k_{1}\pi$.
    Then $k_{1} \not= 1_{K}$ since $\bfOne \not= \bfZero$.
    Define $\rho : \calF \longrightarrow K$ by $f\rho = k_{1}^{f}$.
    Since $\dim_{\calF}K = 1$ it follows that $\rho$ is
    an isomorphism $\calF\additive \longrightarrow K$.
    Let $\sigma = \rho\pi$.
    \[
        \sigma : f \xmapsto{\;\;\;\;\rho\;\;\;\;} k_{1}^{f} %
        \xmapsto{\;\;\;\;\pi\;\;\;\;} \bfZero k_{1}^{f}.
    \]
    Then $\sigma$ is a bijection $\calF \longrightarrow \openF$.
    Since $\calF$ is a field there are binary operations $+$ and $\cdot$
    on $\openF$ which endow $\openF$ with the structure of a field
    in such a way that $\sigma$ is a field isomorphism.
    We have \[
        0_{\calF}\sigma = \bfZero k_{1}^{0_{\calF}} = \bfZero 1_{K} = \bfZero \qtext{and}%
        1_{\calF}\sigma = \bfZero k_{1}^{1_{\calF}} = \bfZero k_{1} = \bfOne.
    \]
    Then (a) holds.

    Now $\pi = \rho^{-1}\sigma$ so since $\rho^{-1} : K \longrightarrow \calF\additive$
    is an isomorphism it follows that $\pi$ is
    an isomorphism $K \longrightarrow \openF\additive$,
    proving (b).
    Let $x \in \openF$ and $k \in K$.
    Now $x = \bfZero l$ for some $l \in K$ whence \[
        xk = (\bfZero l)k = \bfZero (lk) = \bfZero l + \bfZero k = x + \bfZero k,
    \]
    proving the first part of (d).

    Let $f \in \calF$ and $h \in H$.
    Note that $\bfZero h^{-1} = \bfZero$ since $H = M_{\bfZero}$.
    Then \[
        (f\sigma) \cdot (h\sigma) = (fh)\sigma = \bfZero k_{1}^{fh} = %
        \bfZero h^{-1}k_{1}^{f}h = \bfZero k_{1}^{f}h = (f\sigma)h.
    \]
    Putting $f = 1_{\calF}$ and recalling that $1_{\calF}\sigma = \bfOne$
    we obtain $h\sigma = \bfOne h$.
    Now $H$ is a subgroup of $\calF\mult$ and $\sigma$ is a field isomorphism
    so (c) holds.
    If $x \in \openF$ then $x = f\sigma$ for some $f \in \calF$ and the
    remaining assertion in (d) also follows.
\end{proof}

\begin{Lemma} \label{zg.2}
    Let the finite group $M$ act faithfully and transitively
    on the set $\openF$ of cardinality $q$.
    Assume that \[
        \card{ \fix{\openF}{m} } \in \listset{0, 1} \qtext{and}%
         \card{ M_{\alpha} } = \half(q-1) \text{\;\;or\;\;} q-1
    \]
    for all $m \in M\nonid$ and $\alpha \in \openF$.
    Let \[
        K = \set{  k \in M }{ \fix{\openF}{k} = \emptyset } \cup \listset{ 1 }.
    \]
    Then $K$ is an elementary abelian minimal normal subgroup of $M$,
    $K$ is regular on $\openF$ and $M = M_{\alpha}K$.
\end{Lemma}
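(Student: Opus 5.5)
We prove Lemma~\ref{zg.2} by counting, using Lemma~\ref{stab.1} to get regularity of $K$ and a Sylow-type argument to get normality and commutativity.

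\textbf{Size of $K$.} Since $M$ is transitive on $\openF$, Lemma~\ref{stab.1} (with $M$ as $G$) gives an orbit of length $\card{K}$, and every other orbit is regular. As there is only one orbit, $\card{K} = q$, unless that orbit is regular. In the regular case $\card{M} = q$, so $\card{M_{\alpha}} = 1$, forcing $q \le 3$; these small cases are handled separately, and then $K = M$ trivially works.

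\textbf{$K$ is a regular normal subgroup.} In general $\card{M} = q\card{M_{\alpha}}$. No nonidentity element fixes two points, so the stabilizers $M_{\alpha}$ meet pairwise trivially. Hence the set of elements with a fixed point has size $1 + q(\card{M_{\alpha}} - 1)$, and $\card{K} = \card{M} - q(\card{M_{\alpha}} - 1) = q$; this is just Lemma~\ref{stab.1} again. Clearly $K$ is closed under conjugation. To show $K$ is a subgroup, I will use the hypothesis $\card{M_{\alpha}} \ge \half(q-1)$, which makes $M_{\alpha}$ large compared with $q$. Write $q = p^{e}m$ style counts, or better argue as follows. Let $r$ be a prime dividing $q$. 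Since $\card{M_{\alpha}}$ divides $q-1$, it is coprime to $q$, so a Sylow $r$-subgroup $R$ of $M$ meets every point stabilizer trivially. Thus $R \subseteq K$, and $R$ acts semiregularly, so its orbits have length $\card{R}$. Hence every $r$-element lies in $K$, and every element of $K$ has order coprime to $\card{M_{\alpha}}$ (its powers lie in $K$). Conversely, elements of order dividing $\card{M_{\alpha}}$, if they have order coprime to $q$, must fix a point; this needs a count. So $K$ is exactly the set of elements of order dividing $q$, together with $1$. Then $M_{\alpha}$ is a complement of order coprime to its index $q$, and the avoided Frobenius theorem would give $K$ normal.

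\textbf{Main obstacle: avoiding Frobenius' theorem.} I plan to exploit $\card{M_{\alpha}} \in \{\half(q-1), q-1\}$. $M_{\alpha}$ acts semiregularly on $\openF \setminus \{\alpha\}$ with one or two orbits. Take $r \mid q$ and $R$ a Sylow $r$-subgroup. The number of Sylow $r$-subgroups is $\card{M : N_{M}(R)}$, and these subgroups intersect trivially pairwise, since a common element would be an $r$-element in $K$ lying in two distinct Sylows. If $R$ is not normal, then $N_{M}(R)$ has index at least $r+1$, giving at least $(\card{R}-1)(r+1) + \dots$ elements. Comparing with $\card{K} = q$ and $\card{M} = q\card{M_{\alpha}}$, I expect to show that $\card{R} = q$, i.e.\ $q$ is a prime power, and that $R$ is normal. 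Indeed, $M_{\alpha}$ normalizes some Sylow $r$-subgroup $R$ by a Frattini/counting argument, and then $R M_{\alpha}$ has $R$-orbit of $\alpha$ of size $\card{R}$ that is $M_{\alpha}$-invariant; since $M_{\alpha}$ has at most two orbits on $\openF \setminus \{\alpha\}$, each of size at least $\half(q-1)$, an invariant set $\alpha R$ containing $\alpha$ has size $1$, $(q+1)/2$, or $q$. The size $(q+1)/2$ is impossible since $\card{R} \mid q$. Hence $\card{R} = q$, $R = K$ is a subgroup of order $q$, normal (as the set $K$ is normal), and regular. Thus $M = M_{\alpha}K$ by order.

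\textbf{Elementary abelian and minimal normal.} $M_{\alpha}$ acts by conjugation fixed-point-freely on $K\setminus\{1\}$, since $\cc{M_{\alpha}}{k}$ fixes $\alpha$ and $\alpha k$. Its orbits on $K \setminus \{1\}$ correspond to its orbits on $\openF \setminus \{\alpha\}$ via $k \mapsto \alpha k$, so there are at most two orbits. Then all nonidentity elements of $K$ have the same order or fall into two classes of equal size $\half(q-1)$. The set of elements of order $r$ in $\zenter{K}$ is nonempty and $M_{\alpha}$-invariant, so it has size $q-1$ or at least $\half(q-1)$. It therefore generates a characteristic subgroup of order greater than $q/2$, hence equal to $K$. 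So $K$ is abelian of exponent $r$. Any nontrivial normal subgroup of $M$ inside $K$ is $M_{\alpha}$-invariant, hence has size greater than $\half(q-1) + 1$. Being a divisor of $q$, it equals $K$, so $K$ is minimal normal.
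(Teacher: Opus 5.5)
Your proof hinges on the sentence asserting that $M_{\alpha}$ normalizes some Sylow $r$-subgroup $R$ ``by a Frattini/counting argument''. That step is not proved, and it is a genuine gap, because it carries the whole weight of the lemma: once it is granted, the rest is routine.

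A Frattini argument needs a normal subgroup of $M$ having $R$ as a Sylow subgroup. The only candidate is $K$, and knowing that $K$ is a normal subgroup is exactly what you are trying to prove, so this is circular. No counting argument is evident either. $M_{\alpha}$ acts on $\mathrm{Syl}_{r}(M)$, but $\card{M : N_{M}(R)}$ divides $(q/\card{R})\card{M_{\alpha}}$ and need not be coprime to $\card{M_{\alpha}}$, so nothing forces a fixed point. Likewise, your claim that distinct Sylow $r$-subgroups meet trivially ``since a common element would be an $r$-element in $K$ lying in two distinct Sylows'' is a non sequitur, since nothing you have proved forbids this. ``I expect to show'' is not an argument. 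Everything downstream of the normalizer claim is correct: the $M_{\alpha}$-invariance of $\alpha R$, the exclusion of $\card{R} = \half(q+1)$, the conclusion $R = K$, and your final paragraph.

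The idea you are missing, which is what the paper uses, is that $\cc{M}{k} \subseteq K$ for every $k \in K\nonid$. Indeed, an element $m \notin K$ commuting with $k$ has a unique fixed point, which $k$ would then have to fix. So $\cc{M}{k}$ is a subgroup acting semiregularly, and its order divides $q$. Splitting $K\nonid$ into $M$-classes gives $(q-1)/\card{M_{\alpha}} = \sum_{i} q/\card{\cc{M}{k_{i}}} \in \{1,2\}$. The option of a single class with $q = 2\card{\cc{M}{k_{1}}}$ is impossible, since $q$ is odd when $\card{M_{\alpha}} = \half(q-1)$. Hence $K = \cc{M}{k_{1}}$ is a subgroup.

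Alternatively, you can repair the proof within your own framework. Your observation that $\cc{M_{\alpha}}{k}$ fixes both $\alpha$ and $\alpha k$ does not use that $K$ is a group, so $M_{\alpha}$ already has at most two orbits, by conjugation, on the normal set $K\nonid$. Two further facts finish the job. First, powers of elements of $K$ stay in $K$. Second, the number of solutions of $x^{r}=1$ in $M$ is divisible by $r$; this excludes two orbits of elements of distinct prime orders, because $r \nmid \half(q+1)$ when $r \mid q$. Together these force every element of $K$ to be an $r$-element for a single prime $r$, and then $K$ is the Sylow $r$-subgroup.
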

\begin{proof}
    First we claim that if $k \in K\nonid$ then $\cc{M}{k} = \cc{K}{k}$.
    Assume false and choose $m \in \cc{M}{k} \setminus K$.
    Then $\fix{\openF}{m} = \listset{\beta}$ for some $\beta$.
    But then $k$ fixes $\beta$.
    This contradiction proves the claim.
    In particular,
    $\cc{K}{k}$ is a subgroup.

    Lemma~\ref{stab.1} implies that $M$ has one orbit on $\openF$ of length $\card{K}$
    and any remaining orbits are regular.
    Since $M$ is transitive we obtain $\card{K}= \card{\openF} = q$.
    Let $\alpha \in \openF$.
    Then $\card{M} = \card{M_{\alpha}}\card{\openF} = \card{M_{\alpha}}\card{K}$.
    Note that $K$ is a normal subset of $M$.
    Let $k_{1}, \ldots, k_{n}$ be representatives for
    the conjugacy classes of $M$ contained in $K\nonid$.
    Using the claim we have \[
        \card{K} - 1 = \sum_{i=1}^{n} \card{M:\cc{M}{k_{i}}} = %
        \card{M_{\alpha}}\sum_{i=1}^{n} \frac{\card{K}}{\card{\cc{K}{k_{i}}}}
    \]
    so \[
        \frac{q-1}{\card{M_{\alpha}}} = \sum_{i=1}^{n} \frac{\card{K}}{\card{\cc{K}{k_{i}}}}.
    \]
    There are three possibilities.
    \begin{itemize}
        \item[(1)] $\card{M_{\alpha}} = q-1, n=1$ and $K = \cc{K}{k_{1}}$.

        \item[(2)] $\card{M_{\alpha}} = \half(q-1), n=1$ and $\card{K} = 2\card{\cc{K}{k_{1}}}$.

        \item[(3)] $\card{M_{\alpha}} = \half(q-1), n=2$ and $K = \cc{K}{k_{1}} = \cc{K}{k_{2}}$.
    \end{itemize}

    Recall that $\card{K} = q$.
    If $(2)$ holds then $q$ is even,
    contrary to $\card{M_{\alpha}} = \half(q-1)$.
    Hence $(1)$ or $(3)$ holds.
    Then $K$ is a subgroup and hence a normal subgroup of $M$.
    Suppose $L \normal M$ with $1 < L \leq K$.
    Without loss $k_{1} \in L$ so \[
        \card{L} \geq 1 + \half(q-1) = \half(q+1) > \half\card{K}
    \]
    so $L = K$.
    Hence $K$ is a minimal normal subgroup of $M$.
    Moreover, $k_{1} \in \zenter{K}$ so it follows that $K$ is elementary abelian.
    By its definition,
    $K$ is semiregular on $\openF$.
    Now $\card{K} = \card{\openF}$ so $K$ is regular on $\openF$.
\end{proof}

\begin{Lemma} \label{zg.3}
    Let $N$ be a proper normal subgroup of the group $G$ and suppose that
    every element of $G \setminus N$ is an involution.
    Then $N$ is abelian and
    $g^{t} = g^{-1}$ for all $g \in N$ and $t \in G \setminus N$.
\end{Lemma}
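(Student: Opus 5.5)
The plan is to fix an element $t \in G \setminus N$ and use the fact that every element of the coset $Nt$ is an involution. Since $N$ is proper, such a $t$ exists, and $t^{2} = 1$.

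First I show that $t$ inverts $N$. For any $g \in N$, the element $gt$ lies in $G \setminus N$, so $(gt)^{2} = 1$. That is, $gtgt = 1$, hence $tgt = g^{-1}$. Since $t = t^{-1}$, this reads $g^{t} = t^{-1}gt = g^{-1}$. The argument works for every $t \in G \setminus N$ and every $g \in N$, which gives the second assertion.

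Next I show that $N$ is abelian, using that inversion by $t$ is an automorphism. Conjugation by $t$ is a homomorphism $N \to N$, because $N$ is normal, and it coincides with $g \mapsto g^{-1}$. So for $g,h \in N$ we get $(gh)^{-1} = (gh)^{t} = g^{t}h^{t} = g^{-1}h^{-1}$. Inverting both sides gives $gh = hg$, so $N$ is abelian.

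There is no real obstacle. The only point needing care is that $gt \notin N$, which holds because $t \notin N$ and $g \in N$. Normality of $N$ is used only to ensure that conjugation by $t$ maps $N$ to itself, although the identity $g^{t} = g^{-1}$ already shows this directly.
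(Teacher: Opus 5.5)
Your proof is correct and matches the paper's argument essentially verbatim. The only difference is that you square $gt$ where the paper squares $tg$, to get both $t^{2}=1$ and $g^{t}=g^{-1}$; commutativity then comes from the same identity $h^{-1}g^{-1} = (gh)^{t} = g^{-1}h^{-1}$.
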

\begin{proof}
    Let $t \in G \setminus N$ and $g \in N$.
    Then $tg \in G \setminus N$ so
    $t^{2} = (tg)^{2} = 1$.
    Hence $t^{-1} = t, tgtg = 1$ and $g^{t} = g^{-1}$.
    Let $h \in N$.
    Then \[
        h^{-1}g^{-1} = (gh)^{-1} = (gh)^{t} = g^{t}h^{t} = g^{-1}h^{-1}
    \]
    and it follows that $N$ is abelian.
\end{proof}

The following is the special case of the theorem of Zassenhaus referred to at the beginning of this section.

\begin{Theorem} \label{zg.4}
    Let the group $G$ act on the set $\Omega$ of finite cardinality $q+1$.
    Let $\infty, \bfZero$ and $\bfOne$ be distinct elements of $\Omega$.
    Assume the following.
    \begin{itemize}
        \item[(i)]  $G$ is $2$-transitive and every $3$-point stabilizer is trivial.

        \item[(ii)] $\card{G_{\infty\bfZero}} = \half(q-1)$ or $q-1$.

        \item[(iii)]    Every element of $G_{\listset{\infty,\bfZero}} \setminus G_{\infty\bfZero}$
                        is an involution.
    \end{itemize}
    Let $\openF = \Omega \remove{\infty}$ so that $\Omega = \openF^{\infty}$.
    Then there exist binary operations $+$ and $\cdot$ on $\openF$ such that
    $(\openF, +, \cdot, \bfZero, \bfOne)$ is a field and
    the elements of $G$ are linear fractional transformation of $\openF^{\infty}$.
    Moreover $G = \pgl{2}{\openF}$ or $\psl{2}{\openF}$.
\end{Theorem}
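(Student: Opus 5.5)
Set $M = G_{\infty}$, acting on $\openF = \Omega \remove{\infty}$. By (i), $M$ is transitive on $\openF$. Every nonidentity element of $M$ fixes at most one point of $\openF$, because $3$-point stabilizers are trivial. For $\alpha \in \openF$ we have $M_{\alpha} = G_{\infty\alpha}$, and by $2$-transitivity its order is $\half(q-1)$ or $q-1$. Hence Lemma~\ref{zg.2} applies, and
\[
    K = \set{ k \in M }{ \fix{\openF}{k} = \emptyset } \cup \listset{1}
\]
is an elementary abelian minimal normal subgroup of $M$ that is regular on $\openF$, with $M = M_{\bfZero}K$. To apply Lemma~\ref{zg.1} I still need $M_{\bfZero} = G_{\infty\bfZero}$ to be abelian. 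For this I will use (iii). Some element $t \in G$ swaps $\infty$ and $\bfZero$ (by $2$-transitivity), so $N = G_{\infty\bfZero}$ is a proper normal subgroup of index $2$ in $G_{\listset{\infty,\bfZero}}$, and every element outside $N$ is an involution. Lemma~\ref{zg.3} then gives that $N$ is abelian and that $t$ inverts $N$. Lemma~\ref{zg.1} now yields a field structure $(\openF,+,\cdot,\bfZero,\bfOne)$ in which $K$ acts as the translations $x \mapsto x + c$ for all $c \in \openF$, and $M_{\bfZero}$ acts as multiplications $x \mapsto ax$ with $a$ ranging over a subgroup $A \leq \openF\mult$ of index $1$ or $2$. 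Since $\openF$ is finite, $A$ is either $\openF\mult$ or the group of nonzero squares. So $M = G_{\infty}$ is exactly $\pgl{2}{\openF}_{\infty}$ or $\psl{2}{\openF}_{\infty}$.

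Next I identify an element $t$ swapping $\infty$ and $\bfZero$, aiming to show that one can be chosen to act as $x \mapsto c/x$ for some $c$. Such a $t$ is an involution that inverts $M_{\bfZero}$. The plan is to show that $t$ also normalizes $M_{\bfZero}$ acting on $\openF\remove{\bfZero}$ by conjugation: for $h \in M_{\bfZero}$ we get $h^{t} = h^{-1}$, so $x h^{-1} t = x t h$, that is, $(x/a)t = (xt)\cdot a$. Set $c = \bfOne t \in \openF\remove{\bfZero}$, which is nonzero because $t$ maps $\bfZero$ to $\infty$ and $t^{2}=1$. Taking $x = \bfOne$ and $a \in A$ gives $(a^{-1})t = ca$. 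This determines $t$ on $A$, and hence on the $A$-orbit of $\bfOne$. If $A = \openF\mult$, this already gives $t : x \mapsto c/x$ on $\openF\mult$. If $A$ has index $2$, I must also control $t$ on the nonsquares. For that I will use the freedom to replace $t$ by $th$ with $h \in M_{\bfZero}$, together with the facts that $t$ is an involution and has a fixed point only when forced, to pin it down on the other coset as $x \mapsto c'/x$ with $c' \equiv c$ modulo squares. I expect this index-$2$ case to be the main obstacle. The tool I will lean on is that $3$-point stabilizers are trivial: two elements of $G$ that agree on three points coincide. So once I exhibit a linear fractional map $s \in \pgl{2}{\openF}$ agreeing with $t$ on $\infty$, $\bfZero$ and one further point, say $\bfOne$, I want to conclude $t = s$. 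Strictly, that conclusion only holds inside $G$, so I must first know that $s \in G$ or argue differently. Instead I will compare $t$ with $s$ through their conjugation actions on $M$, both of which invert $M_{\bfZero}$ and swap the point stabilizers, to deduce that they agree on all of $\openF\remove{\bfZero}$.

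Once $t$ is a linear fractional transformation, $G = \listgen{M, t}$, because $G_{\infty}$ is maximal-type here: $G = M \cup MtM$ by $2$-transitivity (a Bruhat decomposition). Hence $G$ consists of linear fractional transformations, so $G \leq \pgl{2}{\openF}$. For the final step I compare orders. We have $\card{G} = (q+1)\card{M}$, which equals $(q+1)q(q-1)$ or $(q+1)q(q-1)/2$. These are $\card{\pgl{2}{\openF}}$ and $\card{\psl{2}{\openF}}$ respectively, the latter when $q$ is odd. When $q$ is even, $\half(q-1)$ is not an integer, so only the first case occurs, and then $\psl{2}{\openF} = \pgl{2}{\openF}$. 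In the index-$2$ case I will check that $G$ equals $\psl{2}{\openF}$ rather than some other index-$2$ subgroup. This holds because $M = \psl{2}{\openF}_{\infty}$ and $x \mapsto -1/x$ lies in $\psl{2}{\openF}$, after normalizing $c$ to $-1$ up to a square via conjugation by a scaling in $\pgl{2}{\openF}_{\infty\bfZero}$ (which preserves the field structure up to relabelling $\bfOne$). Then $G = \listgen{M,t} = \psl{2}{\openF}$.
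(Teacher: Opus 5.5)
\textbf{Verdict: there is a genuine gap.} Your first half is fine and follows the paper. Lemmas~\ref{zg.2}, \ref{zg.3} and \ref{zg.1} give the field and show that $M=G_\infty$ consists of the maps $x\mapsto ax+b$, with $a$ ranging over all of $\openF^{\times}$ or over the nonzero squares. When $H=G_{\infty 0}$ has order $q-1$, the inversion relation makes $t$ equal to $x\mapsto c/x$. The gap is the index-$2$ case, which is where most of the paper's proof lives.

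Write $Q$ and $N$ for the nonzero squares and nonsquares. Your own identity $(x/a)t=(xt)a$, applied with $x$ a fixed nonsquare, already shows that $t$ acts as $x\mapsto c/x$ on $Q$ and as $x\mapsto c'/x$ on $N$ for some constant $c'$. The fact that $c'\equiv c$ modulo squares is automatic. What has to be proved is $c'=c$, and none of the tools you list does this:
\begin{itemize}
\item[(1)] Replacing $t$ by $th$ multiplies $c$ and $c'$ by the same square, so $c'/c$ is an invariant of the coset $tH$.
\item[(2)] The involution property forces $c'=c$ only when $t$ interchanges $Q$ and $N$. That happens for $q\equiv 3\pmod 4$, and even there you need the observation that $2$-point stabilizers have odd order. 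For $q\equiv 1\pmod 4$, $t$ preserves $Q$ and $N$ and $t^{2}=1$ holds for every $c'$.
\item[(3)] In that case, counting fixed points over $tH$ only shows that $c'/c$ is a fourth power. For $q=9$ the possibility $c'=-c$ survives.
\item[(4)] Comparing conjugation actions fails as well. The element $t$ does not normalize $M$, since $M^{t}=G_{0}$. The piecewise permutation inverts $H$ whatever $c'$ is. And you have no description of $G_{0}$ that is independent of $t$.
\end{itemize}

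The missing idea is to apply hypothesis~(i) to a product of $t$ with a translation. First replace $t$ by an element $t_{1}\in tH$ with $1t_{1}=-1$. This is possible because $1t$ and $-1$ lie in the same $H$-orbit, which is Step~6 of the paper's proof; for $q\equiv 1\pmod 4$ it uses a conjugate of an involution of $H$. Now $c=-1$. Let $m\in K$ be given by $xm=x-1$. Then $mt_{1}$ sends $\infty\mapsto 0\mapsto 1\mapsto\infty$, so $(mt_{1})^{3}$ fixes three points and is trivial. Hence $(mt_{1})^{2}=t_{1}m^{-1}$. Evaluate both sides at a nonsquare $z$ with $z-1$ a square; such a $z$ exists because $Q\cup\{0\}$ is not an additive subgroup. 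This yields $c'=-1$, both when $-1\in Q$ and when $-1\in N$. Only at this point is $t_{1}$ the map $x\mapsto -1/x$, and your Bruhat and order argument then finishes.

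One further slip: conjugating by a scaling $x\mapsto\lambda x$ multiplies $c$ by $\lambda^{2}$, so it cannot change the square class of $c$. You do not need it, though. Once $t$ is linear fractional, $G$ has index $2$ in $\pgl{2}{\openF}$ and therefore equals $\psl{2}{\openF}$, the unique subgroup of index $2$ when $q$ is odd.
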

\begin{proof}
    Let $M = G_{\infty}, H = G_{\infty\bfZero}, D = G_{\listset{\infty,\bfZero}}$
    and \[
        K = \set{  k \in M }{ \fix{\openF}{k} = \emptyset } \cup \listset{ 1 }.
    \]
   \setcounter{Step}{0}
    \begin{Step} 
        \begin{itemize}
            \item[(a)]  $H$ is an abelian normal subgroup of $D$ with index $2$
                        and $h^{t} = h^{-1}$ for all $h \in H$ and $t \in D \setminus H$.

            \item[(b)]  $K$ is an abelian minimal normal subgroup of $M$,
                        $K$ is regular on $\openF$ and $M = HK$.
        \end{itemize}
    \end{Step}
    \begin{proof}
        (a). Clearly $H \normal D$ and $\card{D:H} \leq 2$.
        By (i),
        $G$ contains an element that interchanges $\infty$ with $\bfZero$
        whence $\card{D:H} = 2$.
        Apply (iii) and Lemma~\ref{zg.3}

        (b). Note that $M$ acts on $\openF$ and by (i),
        $\card{\fix{\openF}{m}} \in \listset{0,1}$ for all $m \in M\nonid$.
        Apply Lemma~\ref{zg.2}
    \end{proof}

    The hypotheses of Lemma~\ref{zg.1} are satisfied and we adopt the notation
    defined in the conclusion.
    In particular,
    $(\openF, +, \cdot, \bfZero, \bfOne)$ is a field of order $q$.
    Let \[
        \calQ = \set{ \bfOne h }{ h \in H}.
    \]

    \begin{Step} 
        \begin{itemize}
            \item[(a)]  The set $\calQ$ is a subgroup of $\openF\mult$
                        that is isomorphic to $H$.

            \item[(b)]  The elements of $M$ are
                        all the linear fractional transformations of the form \[
                            x \mapsto a \cdot x + c
                        \]
                        with $a \in \calQ$ and $c \in \openF$.
        \end{itemize}
    \end{Step}
    \begin{proof}
        (a) follows from Lemma~\ref{zg.1}(c).
        To prove (b) recall that $M = HK$ and apply Lemma~\ref{zg.1}(b),(c),(d).
    \end{proof}

    \begin{Step} 
        Let $t \in D \setminus H$.
        The quantity \[
            x \cdot (xt)
        \]
        is constant as $x$ ranges over any orbit of $H$ on $\openF \remove{\bfZero}$.
    \end{Step}
    \begin{proof}
        Let $x \in \openF \remove{\bfZero}$ and $h \in H$.
        Lemma~\ref{zg.1}(d) implies that \[
            xh = x \cdot (\bfOne h).
        \]
        Step~1(a) and Lemma~\ref{zg.1}(d)(c) yield \[
            xht = xth^{-1} = (xt) \cdot (\bfOne h^{-1}) = (xt) \cdot (\bfOne h)^{-1}.
        \]
        Then $(xh) \cdot (xht) =%
        x \cdot (\bfOne h) \cdot (xt) \cdot (\bfOne h)^{-1} = x \cdot (xt)$.
    \end{proof}

    \begin{Step} 
        Suppose that $\card{H} = q - 1$.
        Then $G = \pgl{2}{\openF}$.
    \end{Step}
    \begin{proof}
        Recall that $\card{\calF} = q$.
        Step~2 implies that $\calQ = \openF \remove{\bfZero}$ and then that
        \begin{equation} \tag{$*$}
            M = \pgl{2}{\openF}_{\infty}.
        \end{equation}
        Now $\calQ$ is the $H$-orbit containing $\bfOne$ so
        $H$ is transitive on $\openF \remove{\bfZero}$.
        Choose $t \in D \setminus H$.
        Step~3 implies that
        $xt = (\bfOne t) \cdot x^{-1}$ for all $x \in \openF \remove{\bfZero}$.
        Since $t$ interchanges $\infty$ with $\bfZero$ it follows that $t$ is
        the linear fractional transformation \[
            x \mapsto (\bfOne t)/x.
        \]
        In particular, $t \in \pgl{2}{\openF}$.

        By $2$-transitivity,
        $M$ is a maximal subgroup of $G$ so $G = \listgen{M, t}$.
        For the same reason,
        $\pgl{2}{\openF} = \listgen{ \pgl{2}{\openF}_{\infty}, t }$.
        Then $(*)$ implies that $G = \pgl{2}{\openF}$.
    \end{proof}

    Henceforth we suppose that \[
        \card{H} = \half(q-1).
    \]
    then $q$ is odd.
    Let \[
        \calN = \bigl( \openF \remove{\bfZero} \bigr) \setminus \calQ.
    \]

    \begin{Step} 
        \begin{itemize}
            \item[(a)]  $\calQ$ and $\calN$ are
                        the orbits of $H$ on $\calF \remove{\bfZero}$ and
                        $\listset{ \calQ, \calN }$ is $D$-invariant.

            \item[(b)]  $M = \psl{2}{\openF}_{\infty}$.
        \end{itemize}
    \end{Step}
    \begin{proof}
        (a). Recall that $H = G_{\infty\bfZero}$ so $H$ is semiregular on $\calF \remove{\bfZero}$.
        Now $\card{\calQ} = \card{\calN} = \card{H}$ so it follows that
        $\calQ$ and $\calN$ are the orbits of $H$ on $\openF \remove{\bfZero}$.
        Since $H \normal D$ and $\openF \remove{\bfZero}$ is $D$-invariant
        it follows that $\listset{ \calQ, \calN }$ is $D$-invariant.

        (b). Since $\openF\mult$ is cyclic of order $q-1$ and $q$ is odd
        it follows that the set of squares in $\openF\mult$ is
        the unique subgroup of $\openF\mult$ with order $\half(q-1)$.
        Step~2(a) implies that $\calQ$ is the set of squares in $\calF\mult$
        and then Step~2(b) implies that $M = \psl{2}{\openF}_{\infty}$.
    \end{proof}

    \begin{Step} 
        Let $t \in D \setminus H$.
        \begin{itemize}
            \item[(a)]  If $q \equiv 1 \bmod 4$ then
                        $-\bfOne \in \calQ$ and $(\calQ t, \calN t) = (\calQ, \calN)$.

            \item[(b)]  If $q \equiv 3 \bmod 4$ then
                        $-\bfOne \in \calN$ and $(\calQ t, \calN t) = (\calN, \calQ)$.
        \end{itemize}
    \end{Step}
    \begin{proof}
        Since $\calF\mult$ is cyclic it follows that
        $-\bfOne$ is the unique involution in $\calF\mult$.

        Suppose that $q \equiv 1 \bmod 4$.
        Now $\calQ$ has order $\half(q-1)$,
        which is even,
        so $-\bfOne \in \calQ$.
        Also $\card{H} = \half(q-1)$.
        Let $r \in H$ be an involution.
        Then $\fix{\openF^{\infty}}{r} = \listset{ \infty, \bfZero }$.
        Now $r$ interchanges two points in $\openF^{\infty} \remove{\infty, \bfZero}$
        so by $2$-transitivity there is a conjugate $s$ of $r$ that
        interchanges $\infty$ with $\bfZero$.
        Then $s \in D$ and moreover $\card{\fix{\openF \remove{\bfZero}}{s}} = 2$.
        Step~5(a) forces $(\calQ s, \calN s) = (\calQ, \calN)$.
        As $\card{D:H} = 2$ we have $D = \listgen{H,s}$.
        Since $\calQ$ and $\calN$ are $H$-invariant and $t \in D \setminus H$
        it follows that $(\calQ t, \calN t) = (\calQ, \calN)$.
        Thus (a) holds.

        Suppose that $q \equiv 3 \bmod 4$.
        Now $\calQ$ has order $\half(q-1)$,
        which is odd,
        so $-\bfOne \in \calN$.
        Moreover $\card{H} = \half(q-1)$ so
        by $2$-transitivity it follows that every $2$-point stabilizer has odd order.
        By hypothesis~(iii), $t$ is an involution.
        Suppose that $(\calQ t, \calN t) = (\calQ, \calN)$.
        Since $\card{\calQ} = \card{\calN} = \half(q-1)$ it follows that
        $t$ has fixed points in $\calQ$ and $\calN$,
        a contradiction.
        Thus $(\calQ t, \calN t) = (\calN, \calQ)$ and (b) holds.
    \end{proof}

    \begin{Step} 
        There exists
        an involution $t \in D \setminus H$ and $y \in \openF \remove{\bfZero}$
        such that \[\mbox{
            $xt = (-\bfOne) \cdot x^{-1}$ for all $x \in \calQ$ and
            $xt = y \cdot x^{-1}$ for all $x \in \calN$.
        }\]
    \end{Step}
    \begin{proof}
        Choose $s \in D \setminus H$.
        Now $\bfOne \in \calQ$ so Step~6 implies that
        $\bfOne s$ and $-\bfOne$ lie in the same $H$-orbit.
        Choose $h \in H$ with $\bfOne sh = -\bfOne$ and set $t = sh$,
        so that $\bfOne t = -\bfOne$.
        Then $t \in D \setminus H$ so by hypothesis~(iii),
        $t$ is an involution.

        Step~3 implies that $x \cdot (xt)$ takes the constant value $\bfOne \cdot \bfOne t = -\bfOne$
        as $x$ ranges over $\calQ$.
        Hence $xt = -\bfOne \cdot x^{-1}$ for all $x \in \calQ$.

        Similarly $x \cdot (xt)$ takes some constant value $y$ as $x$ varies over $\calN$.
        Then $xt = y \cdot x^{-1}$ for all $x \in \calN$.
    \end{proof}

    Let $t$ and $y$ be as in Step~7.
    By Step~2 there exists $m \in M$ such that \[\mbox{
        $xm = x - \bfOne$ for all $x \in \openF$.
    }\]

    \begin{Step} 
        $(mt)^{2} = tm^{-1}$.
    \end{Step}
    \begin{proof}
        In cycle notation we have $t = (\infty \, \bfZero)(\bfOne \, -\bfOne)\ldots$
        so $mt = (\infty \, \bfZero \, \bfOne)\ldots$.
        Thus $(mt)^{3}$ fixes the three points $\infty, \bfZero$ and $\bfOne$.
        Hypothesis~(i) forces $(mt)^{3} = 1$.
        Since $t = t^{-1}$ the conclusion follows.
    \end{proof}

    \begin{Step} 
        There exists $z \in \calN$ with $z - \bfOne \in \calQ$.
    \end{Step}
    \begin{proof}
        By Lagrange's Theorem,
        $\calQ \cup \listset{\bfZero}$ is not a subgroup of $\openF\additive$
        so there exists $a,b \in \calQ \cup \listset{\bfZero}$
        with $a + b \in \calN$.
        Then $a, b \in \calQ$.
        Now $\calQ$ is a subgroup of $\openF\mult$ so \[
            \frac{a}{b} + \bfOne = \frac{a+b}{a} \in \calN \qtext{and} \frac{a}{b} \in \calQ.
        \]
        Put $z = a/b + \bfOne$.
    \end{proof}

    Let $z$ be as in Step~9.
    By Step~8, \[
        z (mt)^{2} = z (tm^{-1}).
    \]
    Using Step~7 we have
    \begin{align*}
        z (mt)^{2} &= (z - \bfOne)tmt = \biggl( -\frac{\bfOne}{z - \bfOne} \biggr)mt \\
                   &= \frac{-z}{z - \bfOne}t = \frac{z-1}{z} \text{\;\;or\;\;} \frac{-y(z - \bfOne)}{z} \\
                   &= \bfOne - \frac{\bfOne}{z} \text{\;\;or\;\;} -y + \frac{y}{z},\\
    \intertext{and}
        z (tm^{-1}) &= \frac{y}{z} + \bfOne.
    \end{align*}
    In both cases $y = -\bfOne$.
\end{proof}

Now $t \in D \setminus H$ so $t$ interchanges $\infty$ with $\bfZero$.
Step~7 implies that $t$ is the linear fractional transformation \[
    x \mapsto -\bfOne/x.
\]
In particular $t \in \psl{2}{\openF}$.
Recall that $M = G_{\infty}$.
By $2$-transitivity,
$G = \listgen{M,t}$.
For the same reason $\psl{2}{\openF} = \listgen{\psl{2}{\openF}_{\infty}, t}$.
Step~5(b) implies that $G = \psl{2}{\openF}$
and the proof is complete.

} 

%% file: bibliography.tex
\bibliographystyle{amsplain}

%
%